\documentclass[11pt]{article}

\usepackage{amsmath, amsfonts, amssymb}
\usepackage{mathrsfs}
\usepackage{theorem}

\usepackage{bm}
\pagestyle{plain} \setlength{\textheight}{20cm}
\setlength{\textwidth}{16cm} \setlength{\parindent}{2em}
\setlength{\oddsidemargin}{0cm} \setlength{\evensidemargin}{0cm}
\setlength{\parskip}{1.5ex plus 0.5ex minus 0.5ex}

\newtheorem{thm}{Theorem}[section]
\newtheorem{prop}[thm]{Proposition}
\newtheorem{lem}[thm]{Lemma}

\newtheorem{defn}[thm]{Definition}

{\theorembodyfont{\rmfamily} \newtheorem{rem}[thm]{Remark}}
{\theorembodyfont{\rmfamily} \newtheorem{exam}{Example}[section]}

\newcommand{\ra}{\rightarrow}
\newcommand{\dis}{\displaystyle}

\def\R{\mathbb R}

\def\d{\text{\rm{d}}}
\def\E{\mathbb E}
\def\p{\mathbb P}

\def\la{\langle}
\def\raa{\rangle}
\def\La{\Lambda}
\def\veps{\varepsilon}

\def\diag{\mathrm{diag}}
\def\S{\mathcal S}

\newcommand{\fin}{\hspace*{\fill}\rule{0.3em}{1ex}}
\newenvironment{proof}{{\bf \noindent Proof.}}{\fin}

\numberwithin{equation}{section}

\begin{document}

\title{Ergodicity  of  regime-switching diffusions in Wasserstein distances \footnote{Supported in
 part by NSFC (No.11301030), 985-project and Beijing Higher Education Young Elite Teacher Project.}}

\author{Jinghai Shao\footnote{Email:\ shaojh@bnu.edu.cn}\\[0.6cm] {School of Mathematical Sciences, Beijing Normal University, 100875, Beijing, China}}
\date{March 2, 2014}
\maketitle
\begin{abstract}
Based on the theory of M-matrix and Perron-Frobenius theorem, we provide some criteria to justify the convergence of the regime-switching diffusion processes in  Wasserstein distances. The cost function we used to define the Wasserstein distance is not necessarily bounded. The continuous time Markov chains with finite and countable state space are all studied. To deal with the countable state space, we put forward a finite partition method. The boundedness for state-dependent regime-switching diffusions in an infinite state space is also studied.
\end{abstract}
AMS subject Classification (2010):\  60A10, 60J60, 60J10   \\
\noindent \textbf{Keywords}: Regime-switching diffusions, M-matrix, Wasserstein distance, optimal coupling

\section{Introduction}
The regime-switching diffusion processes can be viewed as diffusion processes in random environments, which are characterized by continuous time Markov chains.
The behavior of the diffusion in each fixed environment may be very different.  Hence, they can provide more realistic models for many applications, for instance, control problems, air traffic management, biology and mathematical finance. We refer the reader to \cite{CDMR, Gh, J90, GZ, YZ} and references therein for more details of regime-switching diffusion processes and their applications.

In view of the  usefulness of regime-switching diffusion processes, the recurrent properties of these processes are rather complicated due to the appearance of the diffusion process and jump process at the same time. One can get this viewpoint from the examples constructed in \cite{PS}. In \cite{PS}, the authors showed that even in every fixed environment the corresponding diffusion process is recurrent (transient), the diffusion process in random environment could be transient (positive recurrent). In \cite{PP, PS, SX, YZ},  there are some studies on the transience, null recurrence, ergodicity, strong ergodicity of regime-switching diffusion processes. In these works, the convergence of the semigroups to their stationary distributions is in the total variation distance. Recently, in \cite{CH}, besides the total variation distance, the authors also considered the exponential ergodicity in the Wasserstein distance. They provided an on-off type criterion. In \cite{CH}, the state-independent and state-dependent regime-switching diffusion processes in a finite state space were studied. The cost function used in \cite{CH} to define the Wasserstein distance is bounded. The work \cite{CH} attracts us to studying the ergodicity of regime-switching diffusion processes in   Wasserstein distance.

In this work, we consider the regime-switching diffusion process $(X_t,\La_t)$ in the following form: $(X_t)_{t\geq  0}$ satisfies a stochastic differential equation (SDE)
\begin{equation}\label{1.1}
\d X_t=b(X_t, \La_t)\d t +\sigma(X_t,\La_t)\d B_t, \quad X_0=x\in \R^d,
\end{equation}
where $(B_t)$ is a $d$-dimensional Brownian motion, and  $(\La_t)$ is a continuous time  Markov chain with a state space $\mathcal S=\{1,2,\ldots, N\}$, $1\leq N \leq \infty$, such that
\begin{equation}\label{1.2}
\p(\La_{t+\delta}=l|\La_t=k, X_t=x)=\begin{cases} q_{kl}(x)\delta+o(\delta), &\text{if}\ k\neq l,\\
                                       1+q_{kk}(x)\delta+o(\delta), & \text{if}\ k=l,
                         \end{cases}
\end{equation}
for $\delta>0$. The $Q$-matrix $Q_x=(q_{kl}(x))$ is irreducible and conservative for each $x\in \R^d$.
If the $Q$-matrix $(q_{kl}(x))$ does not depend on $x$, then $(X_t,\La_t)$ is called a state-independent regime-switching diffusion; otherwise, it is called a state-dependent one. The state-independent switching $(\La_t)$ is also called Markov switching. When $N$ is finite, namely, $(\La_t)$ is a Markov chain in a finite state space, we call $(X_t,\La_t)$ a regime-switching diffusion process in finite state space. And it is easy to see the recurrent property of $(X_t,\La_t)$ is equivalent to that of $(X_t)$ as pointed out in \cite{PP}. When $N$ is infinite, we call $(X_t,\La_t)$ a regime-switching diffusion process in infinite state space.  There is few study on the recurrent property of the regime-switching processes in an infinite state space. The infinity of $N$ causes some well studied methods useless. In this setting, the effect of recurrent property of $(\La_t)$ to that of $(X_t,\La_t)$ is still not clear.

According to \cite[Theorem 2.1]{YZ}, we suppose that the following conditions hold throughout this work, which ensure that there exists a unique, non-explosive solution of (\ref{1.1}) and (\ref{1.2}): there exists $\bar K>0$ such that
\begin{itemize}
\item[$1^\circ$]\ \text{$q_{ij}(x)$ is a bounded continuous function for each pair of $i,j\in \S$;}
\item[$2^\circ$]\ $|b(x,i)|+\|\sigma(x,i)\| \leq \bar K(1+|x|),\quad x\in \R^d,\  i\in \S$;
\item[$3^\circ$]\ $|b(x,i)-b(y,i)|+\|\sigma(x,i)-\sigma(y,i)\|\leq \bar K|x-y|,\quad x,\,y\in \R^d,\ i\in \S$,
\end{itemize}
where $\|\sigma\|$ denotes the operator norm of matrix $\sigma$.

In this work, we establish some new criteria for the ergodicity of regime-switching processes in  Wasserstein distance.
We assume the semigroup $P_t$ of $(X_t,\La_t)$ converges weakly to some probability measure $\nu$, and consider under what condition $P_t$ also converges in Wasserstein distance to $\nu$. The existence of $\nu$ can be obtained by the results on the boundedness in moments of $(X_t,\La_t)$ (cf. \cite[Theorem 4.14]{Chen}). There are some studies on the asymptotic boundedness in moments for regime-switching processes in a finite state space. See, for instance, \cite{MY} for state-independent switching and \cite{YZ} for state-dependent switching. In Section 4, we shall discuss the boundedness for state-dependent regime-switching in an infinite state space.

In Section 3,  we first consider the ergodicity in  Wasserstein distance for state-independent regime-switching diffusion process in a finite state space. We provides a general result in Theorem \ref{main-1}. Based on it, we find three kinds of conditions to verify its assumption. In Theorem \ref{t-f-m}, we provide an easily verifiable condition using the theory of M-matrix. In Theorem \ref{prin}, we relate it to the well studied topic on the estimate of lower bound of principal eigenvalue of Dirichlet form when $(\La_t)$ is reversible. In Theorem \ref{t-f-m-b}, we give a concise condition using the Perron-Frobenius theorem, which recover the on-off type criterion established in \cite{CH}. But different to \cite{CH}, the cost function used by us to define the Wasserstein distance is not necessarily bounded.

In Section 3,  we proceed to study the state-independent regime-switching diffusion process in an infinite state space. We put forward a finite partition procedure to study the ergodicity in Wasserstein distance. By this method, we first divide the infinite state space $\S$ into finite number of subsets. Then we construct a new Markov chain in a finite state space, which induces a new regime-switching diffusion process. We show that if the new regime-switching process is ergodic in Wasserstein distance according to our criterion then so is the original one. Moreover, this finite projection method owns some kind of consistency, which is explained in Proposition \ref{refine} below. Examples are constructed to show the usefulness of our criteria.

In Section 4, we study the ergodicity of state-dependent regime-switching process in an infinite state space. For the state-dependent switching, it is more difficult to construct coupling process to estimate the Wasserstein distance.  See \cite{XS} for some study on this topic.  At the present stage, we provide a criterion by M-matrix theory to guarantee the existence  of the stationary distribution in weak topology.  We extend the result in \cite{YM03} on boundedness of state-independent switching in a finite state space to a state-dependent switching in an infinite state space. Moreover, this section also serves as providing method to check our assumption (A2) (A3) used in Section 3. At last, note that the convergence  in the weak topology is equivalent to the convergence in Wasserstein distance if the Wasserstein distance is induced by a bounded cost function.

This work is organized as follows. In Section 2, we give out some introduction on the Wasserstein distances, M-matrix theory and optimal couplings for diffusion processes. In Section 3, we provide a criterion on the exponential ergodicity of state-independent regime-switching process in a finite state space. The Section 4 is devoted to studying the state-independent regime-switching process in an infinite state space. In Section 5, we study the existence of stationary distribution of  state-dependent regime-switching processes in a finite state space.

\section{Preliminaries}
Let $(X_t,\La_t)$ be defined by (\ref{1.1}) and (\ref{1.2}). Letting $x=(x_1,\ldots,x_d),\,y=(y_1,\ldots,y_d)\in \R^d$, their Euclidean distance is $|x-y|:=\sqrt{\sum_{i=1}^d(x_i-y_i)^2}$. Let $\rho:[0,\infty)\ra [0,\infty)$ satisfying $\rho(0)=0$, $\rho'>0$ , $\rho''\leq 0$ and $\lim_{r\ra \infty} \rho(r)=\infty$. Then $(x,y)\mapsto\rho(|x-y|)$ is a new metric on $\R^d$. Replacing the original Euclidean distance to the new distance $\rho(|x-y|)$ is useful in application. See, for instance, \cite{Ch-94} for application in estimating the spectral gap of Laplacian operator on manifolds.  Define two distances $\tilde \rho$ and $\tilde \rho_b$ on $\R^d\times \mathcal S$ by
\begin{equation}\label{dist-1}
\tilde\rho((x,i),(y,j))=\sqrt{\mathbf 1_{i\neq j}+\rho(|x-y|)},\quad x,\,y\in \R^d,\, i,\,j\in \mathcal S,
\end{equation}
and
\begin{equation}\label{dist-2}
\tilde \rho_p((x,i),(y,j))=\sqrt{\mathbf 1_{i\neq j}+\rho^p(|x-y|) },\quad x,\,y\in \R^d,\, i,\,j\in \mathcal S, \ p>0.
\end{equation}
Let $\mathcal P(\R^d\times \mathcal S)$ be the collection of all probability measures on $\R^d\times \mathcal S$. Using $\tilde \rho $ and $\tilde\rho_p$ to be the cost function separatively, we can define the Wasserstein distance between every two probability measures $\mu$ and $\nu$ in $\mathcal P(\R^d\times \mathcal S)$ by
\begin{equation}\label{W-dist}
W_{\tilde \rho}(\mu,\nu)=\inf\big\{\E[\tilde\rho(X_1,X_2)]\big\},\quad W_{\tilde \rho_p}(\mu,\nu)=\inf\big\{\E[\tilde \rho_p(X_1,X_2)]\big\},\ \ p>0,
\end{equation}
where the infimum is taken over all pairs of random variables $X_1$, $X_2$ on $\R^d\times \mathcal S$ with respective laws $\mu$, $\nu$.

As our criteria on the ergodicity of $(X_t,\La_t)$ are related to the theory of M-matrix,  here we introduce some basic definition and notation of M-matrices, and refer the reader to the book \cite{BP} for more discussions on this well studied topic. The theory of M-matrix has also been used to study the stability of state-independent regime-switching processes in a finite state space (see \cite[Theorem 5.3]{MY}).

Let $B$ be a matrix or vector. By $B\geq 0$ we mean that all elements of $B$ are non-negative. By $B>0$ we mean that $B\geq 0$ and at least one element of $B$ is positive. By $B\gg 0$, we mean that all elements of $B$ are positive. $B\ll 0$ means that $-B\gg 0$.
\begin{defn}[M-matrix] A square matrix $A=(a_{ij})_{n\times n}$ is called an M-Matrix if $A$ can be expressed in the form $A=sI-B$ with some $B\geq 0$ and $s\geq\mathrm{Ria}(B)$, where $I $ is the $n\times n$ identity matrix, and $\mathrm{Ria}(B)$ the spectral radius of $B$. When $s>\mathrm{Ria}(B)$,   $A$ is called a nonsingular M-matrix.
\end{defn}
In \cite{BP}, the authors gave out 50 conditions which are equivalent to $A$ is a nonsingular M-matrix. We cite some of them below.
\begin{prop}\label{m-matrix}
The following statements are equivalent.
\begin{enumerate}
\item $A$ is a nonsingular $n\times n$ M-matrix.
\item All of the principal minors of $A$ are positive; that is,
\[\begin{vmatrix} a_{11}&\ldots&a_{1k}\\ \vdots& &\vdots\\ a_{1k}&\ldots&a_{kk}\end{vmatrix}>0 \ \  \text{for every $k=1,2,\ldots,n$}.\]
\item Every real eigenvalue of $A$ is positive.
\item $A$ is semipositive; that is, there exists $x\gg 0$ in $\R^n$ such that $Ax\gg0$.
\item There exists $x\gg 0$ with $Ax>0$ and $\sum_{j=1}^ia_{ij}x_j>0$, $i=1,\ldots, n$.
\end{enumerate}
\end{prop}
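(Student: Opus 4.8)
The plan is to reprove precisely this five-element sub-list of the fifty equivalences catalogued in \cite{BP}, working (as \cite{BP} implicitly does) inside the class of $Z$-matrices, i.e.\ under the standing assumption $a_{ij}\leq 0$ for all $i\neq j$; equivalently $A=sI-B$ with $B\geq 0$ once $s\geq\max_i a_{ii}$. The single tool that does the real work is the Perron--Frobenius theorem applied to the nonnegative matrix $B$, or to a strictly positive perturbation of it. I would establish the equivalences by closing two overlapping cycles, $(1)\Rightarrow(2)\Rightarrow(3)\Rightarrow(1)$ and $(1)\Rightarrow(4)\Rightarrow(5)\Rightarrow(2)$.

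For the spectral part: the spectrum of $A=sI-B$ is $\{s-\lambda:\lambda\in\mathrm{spec}(B)\}$, so a real eigenvalue of $A$ arises from a real eigenvalue $\lambda$ of $B$ with $\lambda\leq\mathrm{Ria}(B)$, and Perron--Frobenius guarantees that $\mathrm{Ria}(B)$ is itself an eigenvalue of $B$; hence the smallest real eigenvalue of $A$ is exactly $s-\mathrm{Ria}(B)$, so the assertion that $A$ is a nonsingular M-matrix (namely $s>\mathrm{Ria}(B)$) and the assertion that every real eigenvalue of $A$ is positive coincide, which is $(1)\Leftrightarrow(3)$. For $(1)\Rightarrow(2)$ I would observe that each principal submatrix $A_J=sI-B_J$ has $B_J\geq 0$ a principal submatrix of $B$, so $\mathrm{Ria}(B_J)\leq\mathrm{Ria}(B)<s$ by monotonicity of the spectral radius on nonnegative matrices; thus $A_J$ is again a nonsingular M-matrix and, pairing complex-conjugate eigenvalues, $\det A_J=\prod(s-\lambda)>0$. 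For $(2)\Rightarrow(3)$ I would use that the characteristic polynomial $\det(\lambda I-A)=\lambda^n-c_1\lambda^{n-1}+\cdots+(-1)^n c_n$ has coefficients $c_k$ equal to the sums of the $k\times k$ principal minors; under (2) all $c_k>0$, so evaluating at $\lambda=-\mu$ with $\mu\geq 0$ yields $(-1)^n(\mu^n+c_1\mu^{n-1}+\cdots+c_n)\neq 0$, i.e.\ $A$ has no eigenvalue $\leq 0$.

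For the semipositivity part: $(1)\Rightarrow(4)$ follows by replacing $B$ with $B+\varepsilon E$, where $E$ is the all-ones matrix; this is strictly positive, hence irreducible, and still has spectral radius $<s$ for small $\varepsilon$, so its Perron eigenvector $u\gg 0$ gives $Au=(s-\mathrm{Ria}(B+\varepsilon E))u+\varepsilon Eu\gg 0$. For $(4)\Rightarrow(1)$, from $x\gg 0$ and $Ax\gg 0$ one reads off $Bx\leq\theta x$ with $\theta=\max_i(Bx)_i/x_i<s$, and iterating gives $B^kx\leq\theta^k x$, so that $\|B^k\|$ grows no faster than $\theta^k$ and $\mathrm{Ria}(B)=\lim_k\|B^k\|^{1/k}\leq\theta<s$ (the Collatz--Wielandt estimate). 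Then $(4)\Rightarrow(5)$ is immediate from the sign pattern, since for the witness $x$ one has $\sum_{j=1}^i a_{ij}x_j=(Ax)_i+\sum_{j>i}(-a_{ij})x_j\geq(Ax)_i>0$. The step I expect to be the main obstacle is the return $(5)\Rightarrow(2)$, which I would prove by induction on $n$. The case $i=1$ of (5) forces $a_{11}>0$, so one may form the Schur complement $A'=(a_{ij}-a_{i1}a_{1j}/a_{11})_{i,j\geq 2}$; using the $Z$-pattern together with $(Ax)_1\geq 0$ — the crucial observation being that $\sum_{j=1}^i a_{1j}x_j\geq(Ax)_1\geq 0$, so that the Schur correction to every partial-sum inequality has the favorable sign — one checks that $A'$ is again a $Z$-matrix witnessing (5) through $x'=(x_2,\ldots,x_n)$, and that every proper principal submatrix $A_J$ likewise witnesses (5) through $x_J$. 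The induction hypothesis then gives $\det A_J>0$ for every proper $J$ and $\det A=a_{11}\det A'>0$, so all principal minors of $A$ are positive. Threading the two cycles together finishes the argument; alternatively one may simply cite the corresponding arcs of the implication diagram in \cite{BP}.
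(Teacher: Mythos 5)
Your proposal is correct, but it is doing something the paper does not attempt: the paper offers no proof of Proposition \ref{m-matrix} at all, simply citing the catalogue of fifty equivalent conditions in \cite{BP}, so any self-contained argument is by definition a different route. Two features of your write-up deserve comment. First, you rightly make explicit the standing hypothesis that $A$ is a $Z$-matrix ($a_{ij}\leq 0$ for $i\neq j$), which the paper leaves implicit; without it the proposition as literally stated is false (e.g.\ $A=\left(\begin{smallmatrix}1&1\\0&1\end{smallmatrix}\right)$ satisfies (2)--(5) but admits no representation $sI-B$ with $B\geq 0$). In the paper's applications the matrices $-(Q+\diag(\beta))$ are indeed $Z$-matrices, so nothing is lost. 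Second, the two steps that carry real content both check out: in $(4)\Rightarrow(1)$ the Collatz--Wielandt bound $B^kx\leq\theta^kx$ with $x\gg 0$ does give $\mathrm{Ria}(B)\leq\theta<s$ via Gelfand's formula; and in $(5)\Rightarrow(2)$ the identity $\sum_{j\leq i}a'_{ij}x_j=\sum_{j\leq i}a_{ij}x_j-(a_{i1}/a_{11})\sum_{j\leq i}a_{1j}x_j$, combined with $a_{i1}\leq 0$ and $\sum_{j\leq i}a_{1j}x_j\geq(Ax)_1\geq 0$, shows the Schur complement inherits all the strict partial-sum inequalities (and in particular, via the last row, inherits $A'x'>0$), so the induction closes and yields positivity of \emph{all} principal minors, which is exactly what the backward step $(2)\Rightarrow(3)$ through the elementary symmetric functions of the principal minors requires. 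What your approach buys is a proof of precisely the five conditions the paper uses, from a single tool (Perron--Frobenius) rather than an appeal to the full implication diagram of \cite{BP}; what the citation buys is brevity and access to the remaining forty-five conditions should they be needed.
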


Now we recall some results on the optimal couplings of Wasserstein distances, which will help us to check the assumption (A1) below.
Let $(E,\tilde d,\mathcal E)$ be a complete separable metric space. Let $\mathcal P(E)$ denote the set of all probability measures on $E$. For two given probability measures $\mu$ and $\nu$ on $E$, define
\[W_p(\mu,\nu)=\inf_{\pi\in \mathcal(\mu,\nu)}\Big\{\int_{E\times E}\tilde d(x,y)^p\pi(\d x,\d y)\Big\}^{1/p},\quad p\geq 1,
\] where $\mathcal C(\mu,\nu)$ denotes the set of all couplings of $\mu$ and $\nu$.
It is well known that given $\mu$ and $\nu$, the infimum is attained for some coupling $\pi$. For a sequence of probability measures $\mu_n$ in $\mathcal P (E)$, the statement that $\mu_n$ converges to some $\mu\in \mathcal P(E)$ in the metric $W_p$ is equivalent to that $\mu_n$ converges weakly to $\mu$ and for some (or any) $a\in E$,
\[\lim_{K\ra \infty} \sup_n\int_{\{x:\tilde d(x,a)>K\}}\tilde d (x,a)^p\mu_n(\d x)=0.\] Hence, when $\tilde d$ is bounded, the convergence  in Wasserstein distance $W_p$ is equivalent to the weak convergence.

It can be seen from the definition of the Wasserstein distance that the calculation of this distance between two probability measures is not an easy task. Usually, one tries to find some suitable estimates on it.
In \cite{Ch-94}, some constructions of optimal couplings for Markov chain and diffusion processes are given. We recall some results on the optimal couplings of diffusion processes.

Consider a diffusion process in $\R^d$ with operator $L=\frac 12\sum_{i,j=1}^d a_{ij}(x)\frac{\partial^2}{\partial x_i\partial x_j}+\sum_{i=1}^db_i(x)\frac{\partial }{\partial x_i}$. For simplicity, we write $L\sim (a(x),b(x))$. Given two diffusions with operators $L_k\sim(a_k(x),b_k(x))$, $k=1,2$ respectively, an operator $\tilde L$ on $\R^d\times \R^d$ is called a coupling operator of $L_1$ and $L_2$ if
\begin{align*}
\tilde L f(x,y)&=L_1f(x), \ \text{if $f\in C_b^2(\R^d)$  and independent of $y$},\\
\tilde L f(x,y)&=L_2f(y), \ \text{if $f\in C_b^2(\R^d)$  and independent of $x$}.
\end{align*}
Let $\tilde d\in C^2(\R^d\times\R^d\backslash\{(x,x);x\in\R^d\})$ be a metric on $\R^d$.
A coupling operator $\bar L$ is called $\tilde d$-optimal if $\bar L\tilde d(x ,y)=\inf_{\tilde L}\tilde L\tilde d(x,y)$ for all $x\neq y$, where $\tilde L$ varies over all coupling operators of $L_1$ and $L_2$. The coefficients of any coupling operator must be of the form $\tilde L\sim \big(a(x,y), b(x,y)\big)$,
\[a(x,y)=\begin{pmatrix} a_1(x) & c(x,y)\\
                             c(x,y)^\ast &a_2(y)\end{pmatrix},\  b(x,y)=\begin{pmatrix} b_1(x)\\ b_2(y)\end{pmatrix},
                             \]
where $c(x,y)$ is a matrix such that $a(x,y)$ is non-negative definite and $c(x,y)^\ast$ denotes the transpose of $c(x,y)$. Next, we recall a result for optimal couplings of one-dimensional diffusion processes due to \cite[Theorem 5.3]{Ch-94}, and refer to \cite{Ch-94} for results in multidimensional case and more discussion.
\begin{thm}[\cite{Ch-94} Theorem 5.3]
Let $\rho\in C^2(\R_+,\R_+)$ with $\rho(0)=0$, $\rho'>0$ and $\rho''\leq 0$. Set $\tilde d(x,y)=\rho(|x-y|)$. If $d=1$, then the $\tilde d$-optimal solution $c(x,y)$ is given by $c(x,y)=-\sqrt{a_1(x)a_2(y)}$.
\end{thm}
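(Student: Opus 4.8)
The plan is to reduce the minimization over \emph{all} coupling operators to a one-parameter problem. For any coupling operator $\tilde L\sim(a(x,y),b(x,y))$ the diagonal blocks $a_1(x),a_2(y)$ and the drift $b(x,y)=(b_1(x),b_2(y))^\ast$ are already prescribed by $L_1$ and $L_2$, so the only free quantity is the scalar $c(x,y)$; moreover, since $a_1(x),a_2(y)\ge0$, the requirement that $\begin{pmatrix}a_1(x)&c(x,y)\\ c(x,y)&a_2(y)\end{pmatrix}$ be non-negative definite is equivalent to $c(x,y)^2\le a_1(x)a_2(y)$, i.e. $|c(x,y)|\le\sqrt{a_1(x)a_2(y)}$. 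Hence, once $\tilde L\tilde d(x,y)$ is computed explicitly, it will only remain to minimize a function of the single variable $c(x,y)$ over the interval $[-\sqrt{a_1(x)a_2(y)},\,\sqrt{a_1(x)a_2(y)}]$.

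For the computation I would fix $x\ne y$ and, using $\tilde d(x,y)=\tilde d(y,x)$, assume without loss of generality that $x>y$, so that $\tilde d(x,y)=\rho(x-y)$ is $C^2$ near $(x,y)$ by the hypotheses on $\rho$. Differentiating gives $\partial_x^2\tilde d=\partial_y^2\tilde d=\rho''(x-y)$, $\partial_x\partial_y\tilde d=-\rho''(x-y)$ and $\partial_x\tilde d=-\partial_y\tilde d=\rho'(x-y)$; substituting into $\tilde Lf=\tfrac12\big(a_1(x)\partial_x^2f+2c(x,y)\partial_x\partial_yf+a_2(y)\partial_y^2f\big)+b_1(x)\partial_xf+b_2(y)\partial_yf$ yields
\[\tilde L\tilde d(x,y)=\tfrac12\rho''(x-y)\big(a_1(x)+a_2(y)-2c(x,y)\big)+\rho'(x-y)\big(b_1(x)-b_2(y)\big).\]
The only term involving $c(x,y)$ is $-\rho''(x-y)\,c(x,y)$. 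Since $\rho''\le0$, the coefficient $-\rho''(x-y)$ is non-negative, so this term — and therefore $\tilde L\tilde d(x,y)$ — is smallest when $c(x,y)$ takes its most negative admissible value $c(x,y)=-\sqrt{a_1(x)a_2(y)}$. The region $x<y$ is handled identically with $x-y$ replaced by $y-x$ and leads to the same formula, which shows that the coupling operator with $c(x,y)=-\sqrt{a_1(x)a_2(y)}$ realizes $\inf_{\tilde L}\tilde L\tilde d(x,y)$, i.e. it is $\tilde d$-optimal.

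I do not expect a genuine obstacle: after the derivatives of $\rho(|x-y|)$ are in hand, the argument is a one-line linearity-in-$c$ observation. The points that need care are the characterization of the admissible $c(x,y)$ through non-negative definiteness of the $2\times2$ matrix, and the sign bookkeeping — it is precisely the concavity assumption $\rho''\le0$ that forces the optimal $c$ to the \emph{lower} endpoint of its range rather than the upper one. (If $\rho''(x-y)=0$ for some pair $x\ne y$, the minimizer is non-unique there, but $-\sqrt{a_1(x)a_2(y)}$ remains one optimal choice, so the stated formula still gives a $\tilde d$-optimal coupling.)
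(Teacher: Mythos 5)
Your argument is correct, and it is the standard derivation of this result: the paper itself gives no proof (it simply cites \cite{Ch-94}), and your computation is exactly the one underlying Chen's Theorem 5.3 in dimension one. The two points you flag — that non-negative definiteness of the $2\times 2$ matrix with prescribed diagonal $a_1(x),a_2(y)\geq 0$ is equivalent to $|c(x,y)|\leq\sqrt{a_1(x)a_2(y)}$, and that $\rho''\leq 0$ makes $\tilde L\tilde d$ a non-increasing affine function of $c(x,y)$ so the infimum is attained at the lower endpoint — are precisely the content of the proof, and your handling of the degenerate case $\rho''(x-y)=0$ is also right.
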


\section{ Markovian switching in a finite state space}
Let $(X_t,\La_t)$ be a regime-switching diffusion process defined by (\ref{1.1}) and (\ref{1.2}) with $N<\infty$ and $Q=(q_{ij})$ state-independent switching.
For each fixed environment $i\in \mathcal S$, the corresponding diffusion $X_t^{(i)}$ is defined by
\begin{equation}\label{diff-fix}
\d X_t^{(i)}=b(X_t^{(i)},i)\d t+\sigma(X_t^{(i)},i)\d B_t, \quad X_0^{(i)}=x\in \R^d.
\end{equation}
Let $a^{(i)}(x)=\sigma(x,i)\sigma(x,i)^\ast$, then the infinitesimal operator $L^{(i)}$  of $(X_t^{(i)})$ is
\[L^{(i)}=\frac 12\sum_{k,l=1}^da^{(i)}_{kl}(x)\frac{\partial^2}{\partial x_k\partial x_l} +\sum_{k=1}^d b_k(x,i)\frac{\partial}{\partial x_k}.\]
Let $\rho:[0,\infty)\ra [0,\infty)$ satisfying
\begin{equation}\label{con-rho}
\rho(0)=0,\ \rho'>0,\ \rho''\leq 0\ \text{and}\  \rho(x)\ra \infty \ \text{as}\ x\ra \infty.
\end{equation}
It is clear that $\rho(x)=x$ satisfying previous conditions. In the sequel, the function $\rho$ we used satisfies (\ref{con-rho}).
We pose the following assumption to study the ergodic property of $(X_t,\La_t)$ in this section.
\begin{itemize}
\item[(A1)] For each $i\in \mathcal S$, there exist a coupling operator $\tilde L^{(i)}$ of $L^{(i)}$ and itself, and a constant $\beta_i$ such that
\begin{equation}\label{coup-ine}
\tilde L^{(i)}\rho(|x-y|)\leq \beta_i\rho(|x-y|), \quad x,\, y\in \R^d,\ x\neq y.\end{equation}
\item[(A2)] There exists  constants $C_1, \gamma>0$ such that
$\E[\rho(|X_t|)]\leq C_1\big(1+\E[\rho(|X_0|)]e^{-\gamma t}\big),\, t>0$.
\end{itemize}
Note that (A2) can not be deduced from (A1) by setting $y=0$ even when $\beta_i<0$.
According to \cite[Section 3]{ChL}, without loss of  generality, we assume that once the coupling processes corresponding to $\tilde L^{(i)}$ meet each other, then they will move together. This makes us to consider the inequality of (A1) only for $x\neq y$ in $\R^d$. This property will be used directly later, rather than mentioning the details.
Our introduction of $\rho(|\cdot|)$-optimal coupling operator could be used here to help us check this assumption. Note that the constant $\beta_i$ could be positive and negative similar to \cite{CH}.  If $\beta_i$ is negative, condition (A1) implies that the process $(X_t^{(i)})$ is exponentially convergent (see \cite{MT3}). When $\beta_i<0$ is smaller, the process $(X_t^{(i)})$ converges more rapidly. These constants $\beta_i$ are used to characterize the ergodic behavior of (\ref{coup-ine}). In \cite{PS},  R. Pinsky and M. Scheutzow constructed examples on $[0,\infty)\times \{1,2\}$ to show that even when
all $(X^{(k)}_t)$, $k=1,2$, are positive recurrent (transient), $(X_t)$ could be transient (positive recurrent).
These examples reveal the complexity in studying the recurrence properties of regime-switching
diffusions. One aim of this work is to show how the coaction of jumping process and diffusion process in each fixed environment determines the recurrent property of diffusion process in random environment. Besides, in \cite{SX}, we showed that in one dimensional space, if for each fixed environment $i\in \S$ with $N<\infty$, $(X_t^{(i)})$ is strongly ergodic, then so is $(X_t)$.

Recall the definition of Wasserstein distance $W_{\tilde \rho}$, $W_{\tilde \rho_p}$ given by (\ref{W-dist}). In this work, we write $\diag(\beta_1,\ldots,\beta_N)$ to denote the diagonal matrix induced by vector $(\beta_1,\ldots, \beta_N)^\ast$ as usual.
We now come to our first main result.

\begin{thm}\label{main-1}
Assume that (A1) (A2) and (A3) hold. If there exists a vector $\xi\gg0$ such that $\lambda=(\lambda_1,\ldots,\lambda_N)^\ast:=\big(Q+\diag(\beta_1,\ldots,\beta_N)\big)\xi\ll 0$, then there exists a probability measure $\nu$ on $\R^d\times \S$ such that
\begin{equation}\label{m-ine-1}
W_{\tilde \rho}(\delta_{(x,i)}P_t,\nu)\leq 2\tilde C(\sqrt{3+\rho(|x|)}+\tilde C) e^{-\tilde \alpha t},
\end{equation}
where $P_t$ is the Markovian semigroup associated with $(X_t,\La_t)$, $\delta_{(x,i)}$ denotes the Dirac measure at $(x,i)$, $\tilde \alpha $ and $\tilde C$ are positive constants, defined by $\tilde \alpha=\min\{\alpha,\theta\}/4$, $\tilde C=\max\{C_1, C_2, 1\}$ with $\alpha,\,\theta, C_2$ given by (\ref{theta}) and Lemma \ref{lem-1} below.
\end{thm}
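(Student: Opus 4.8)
The plan is to couple two copies of $(X_t,\La_t)$ started from different points so that the driving chains coalesce at exponential rate and the diffusion components then contract, and to convert the hypothesis $(Q+\diag(\beta_1,\dots,\beta_N))\xi\ll0$ into an exponential bound via a linear ODE comparison. Write $\beta=\diag(\beta_1,\dots,\beta_N)$. Since $\S$ is finite and $\lambda=(Q+\beta)\xi\ll0$, the constant $\alpha:=\min_{1\le k\le N}\big(-\lambda_k/\xi_k\big)$ is strictly positive and $(Q+\beta)\xi\le-\alpha\xi$; via $\xi\gg0$, this is the only place the M-matrix/Perron--Frobenius input (Proposition~\ref{m-matrix}) enters. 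I would then construct a coupling $(X_t,\La_t,Y_t,\La_t')$ with $(X_0,\La_0)=(x,i)$ and $(Y_0,\La_0')=(y,j)$: since the switching is state-independent, the two chains can be coupled so that each is marginally a copy of $(\La_t)$, they evolve until the first meeting time $T$ and stay together afterwards (irreducibility and finiteness of $\S$ give $\p(T>t)\le C_0e^{-\theta t}$ with $\theta$ as in~(\ref{theta})); on $[0,T)$ drive $X$ and $Y$ by independent Brownian motions, and on $[T,\infty)$, in the common environment $\hat\La_t:=\La_t=\La_t'$, use the coupling operator $\tilde L^{(\hat\La_t)}$ of~(A1), adopting the convention (recalled after~(A2)) that the diffusions move together once they meet, so that (\ref{coup-ine}) is only invoked off the diagonal.

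The core step --- which I expect to be the content of Lemma~\ref{lem-1} --- is to establish the post-coalescence contraction. Conditioning on $\mathcal F_T$ and setting $u_s(k)=\E\big[\rho(|X_{T+s}-Y_{T+s}|)\mathbf 1_{\hat\La_{T+s}=k}\mid\mathcal F_T\big]$ for $s\ge0$, Dynkin's formula applied to $(x,y,k)\mapsto\rho(|x-y|)\mathbf 1_{k=m}$, together with~(A1) and the conservativeness of $Q$, should give the componentwise differential inequality $\frac{\d}{\d s}u_s\le(\beta+Q^\ast)u_s$. Pairing with $\xi$ then yields $\frac{\d}{\d s}\langle\xi,u_s\rangle\le\langle u_s,(Q+\beta)\xi\rangle\le-\alpha\langle\xi,u_s\rangle$, so $\langle\xi,u_s\rangle\le e^{-\alpha s}\langle\xi,u_0\rangle$; since $\xi_{\min}\sum_ku_s(k)\le\langle\xi,u_s\rangle$ and $\sum_ku_0(k)=\rho(|X_T-Y_T|)$, this gives $\E[\rho(|X_{T+s}-Y_{T+s}|)\mid\mathcal F_T]\le(\xi_{\max}/\xi_{\min})e^{-\alpha s}\rho(|X_T-Y_T|)$ for all $s\ge0$.

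Next I would assemble the two-point estimate by splitting at time $t/2$. On $\{\La_{t/2}=\La_{t/2}'\}=\{T\le t/2\}$, running the previous contraction from time $t/2$ and using~(A2) to bound $\E[\rho(|X_{t/2}|)]+\E[\rho(|Y_{t/2}|)]$ uniformly in $t$ gives $\E[\rho(|X_t-Y_t|)\mathbf 1_{\{T\le t/2\}}]\le(\xi_{\max}/\xi_{\min})C_1\big(2+\rho(|x|)+\rho(|y|)\big)e^{-\alpha t/2}$; on the complementary event, of probability at most $C_0e^{-\theta t/2}$, Cauchy--Schwarz together with the subadditivity $\rho(|X_t-Y_t|)\le\rho(|X_t|)+\rho(|Y_t|)$ and the second-moment bound~(A3) on $\E[\rho^2(|X_t|)]$ gives $\E[\rho(|X_t-Y_t|)\mathbf 1_{\{T>t/2\}}]\le C\big(1+\rho(|x|)+\rho(|y|)\big)e^{-\theta t/4}$, while $\p(\La_t\ne\La_t')\le C_0e^{-\theta t/2}$. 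Since $\E[\tilde\rho((X_t,\La_t),(Y_t,\La_t'))]\le\big(\p(\La_t\ne\La_t')+\E[\rho(|X_t-Y_t|)]\big)^{1/2}$ by Jensen, collecting these bounds and taking a square root yields $W_{\tilde\rho}(\delta_{(x,i)}P_t,\delta_{(y,j)}P_t)\le C\big(1+\rho(|x|)+\rho(|y|)\big)^{1/2}e^{-\tilde\alpha t}$ with $\tilde\alpha=\tfrac14\min\{\alpha,\theta\}$. Applying this with $(y,j)=(x,i)$ but comparing the laws at times $t$ and $t+r$ --- writing $\delta_{(x,i)}P_{t+r}=(\delta_{(x,i)}P_r)P_t$, using the gluing bound $W_{\tilde\rho}(\mu P_t,\mu'P_t)\le\int W_{\tilde\rho}(\delta_zP_t,\delta_{z'}P_t)\,\pi(\d z,\d z')$ for a coupling $\pi$ of $\mu,\mu'$, and~(A2) to control $\E[\rho(|X_r|)]$ uniformly in $r$ --- shows that $(\delta_{(x,i)}P_t)_{t\ge0}$ is Cauchy in $W_{\tilde\rho}$; completeness of the Wasserstein space over $(\R^d\times\S,\tilde\rho)$ then provides the limit $\nu$, which the two-point bound shows is independent of $(x,i)$, and letting $r\to\infty$ with continuity of $W_{\tilde\rho}$ in its arguments gives~(\ref{m-ine-1}) after collecting the constants into $\tilde C=\max\{C_1,C_2,1\}$.

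\emph{Main obstacle.} The hard part will be the third paragraph: the hypotheses~(A1)--(A3) are purely local and give no contraction while $\La_t\ne\La_t'$, so one must rigorously justify the vector differential inequality $\frac{\d}{\d s}u_s\le(\beta+Q^\ast)u_s$ and the Perron--Frobenius comparison against $\xi$, and then absorb the non-contractive phase $[0,T)$ using only the uniform moments~(A2)--(A3) and the exponential coalescence of the chains. The latter is precisely what forces the loss of a factor $4$ in $\tilde\alpha$: one factor $2$ from the time-splitting-plus-Cauchy--Schwarz step, and one from $\E[\tilde\rho]\le(\E[\tilde\rho^2])^{1/2}$.
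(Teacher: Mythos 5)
Your proposal follows essentially the same route as the paper: the same coupling (classical coupling of the chains so that they stick together after the meeting time $\tau$, independent diffusions before $\tau$, the (A1)-coupling afterwards), the same post-coalescence contraction obtained by testing against $\xi$ (the paper applies It\^o's formula to $\rho(|X_t-Y_t|)\xi_{\La_t}$ and Gronwall, which is the pathwise version of your dual differential inequality for $u_s$), the same split at time $t/2$ with Cauchy--Schwarz and the exponential tail of $\tau$, and the same passage to a limiting measure $\nu$ with the convexity/duality of $W_{\tilde\rho}$. The only cosmetic differences are that the paper extracts $\nu$ by tightness and a subsequential argument rather than by Cauchyness, and that it applies Cauchy--Schwarz directly to $\tilde\rho\,\mathbf 1_{\tau>t/2}$ (so only the first-moment bound (A2) and subadditivity of $\rho$ are needed, not the second-moment hypothesis you attribute to the never-defined condition (A3)).
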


The existence of $\xi\gg 0$ such that $\big(Q+\diag(\beta_1,\ldots,\beta_N)\big)\xi\ll 0$ is not an easily checked condition in general. Therefore, in Theorem \ref{t-f-m} below, we provide a sufficient condition by using the theory of M-matrix. When $(\La_t)$ is reversible, we relate the existence of such $\xi$ with the positiveness of principal eigenvalue in Theorem \ref{prin} below. In Theorem \ref{t-f-m-b}, we modify the definition of Wasserstein distance and provide another sufficient condition by using Perron-Frobenius theorem.

Before proving Theorem \ref{main-1}, we make some necessary preparation. First we construct the coupling process $(Y_t,\La'_t)$ of $(X_t,\La_t)$ to be used in the arguments of this Theorem.
Let $(\La_t, \La_t')$ be the classical coupling for $(\La_t)$, whose infinitesimal operator is  defined by
\begin{equation}\label{jump-coup}
\begin{split}
\tilde Qf(k,l)&:= \sum_{m,n\in \mathcal S} q_{(k,l)(m,n)}(f(m,n)-f(k,l))\\
       &=\mathbf 1_{\{k=l\}}\!\sum_{m\in \mathcal S} q_{km}(f(m,m)-f(k,l))+\mathbf 1_{\{k\neq l\}}\!\!\sum_{m\in \mathcal S,m\neq k}\!\!q_{km}(f(m,l)-f(k,l))\\
       &\quad+\mathbf 1_{\{k\neq l\}}\!\sum_{m\in \mathcal S,m\neq l}\!\!q_{lm}(f(k,m)-f(k,l)),
\end{split}
\end{equation}
for every measurable function $f$ on $\mathcal S\times \mathcal S$. This implies that once $\La_t=\La'_t$, then $\La_s=\La_s'$ for all $s>t$.
For $(x,k,y,l)\in \R^d\times \mathcal S\times \R^d\times \mathcal S$, set
\[a(x,k,y,l)=\mathbf 1_{\Delta}(k,l)a_k(x,y)+\mathbf 1_{\Delta^c}(k,l)\begin{pmatrix} a(x,k)&0\\0&a(y,l)\end{pmatrix},\]
where $\Delta=\{(k,k);\ k\in \S\}$, and $a_k(x,y)$ is determined by the coupling operator $\tilde L^{(k)}$ provided $\tilde L^{(k)}\sim (a_k(x,y), (b_k(x),b_k(y))^\ast)$.
Let $(X_t,Y_t)$ satisfy the following SDE,
\begin{equation}\label{coup-pro}
\d \begin{pmatrix} X_t\\ Y_t\end{pmatrix}=\Psi(X_t,\La_t,Y_t,\La'_t)\d W_t+\begin{pmatrix} b(X_t,\La_t)\\
b(Y_t,\La'_t)\end{pmatrix}\d t,
\end{equation}
where $\Psi(x,k,y,l)$ is a $2d\times 2d$ matrix such that $\Psi(x,k,y,l)\Psi(x,k,y,l)^\ast=a(x,k,y,l)$, and $(W_t)$ is a Brownian motion on $\R^{2d}$.

Let $\tau=\inf\{t\geq 0;\ \La_t=\La_t'\}$ be the coupling time of $(\La_t,\La'_t)$. Since $\mathcal S$ is a finite set, and  $\tilde Q$ defined by (\ref{jump-coup}) is irreducible, it is well known that there exists a constant $\theta>0$ such that
\begin{equation}\label{theta}\p(\tau>t)\leq e^{-\theta t},\quad t>0.\end{equation}
The processes $(X_t)$ and $(Y_t)$ defined by (\ref{coup-pro}) will evolve independently until time $\tau$, then they evolve as the diffusion process $(X_t^{(k)}, Y_t^{(k)})$ corresponding to $\tilde L^{(k)}$ when $\La_t=\La_t'=k$. So once $X_t=Y_t$ at some   $t>\tau$,  they will move together after $t$.  This kind of coupling processes has appeared in \cite{XS} and \cite{CH}. In \cite{XS}, together with F. Xi, we discussed the question   what conditions could ensure this kind of coupling to be successful when $(X_t,\La_t)$ is a state-dependent regime-switching process in a finite state space.

The following lemma is the key point in the argument of Theorem \ref{main-1}.
\begin{lem}\label{lem-1}
Let the assumption of Theorem \ref{main-1} be satisfied, and assume further  $\La_0=\La_0'$ and $X_0=x$, $Y_0=y$. Then for \,$C_2=\xi_{\mathrm{max}}/\xi_{\mathrm{min}}>0$, $\alpha=-\lambda_{\mathrm{max}}/\xi_{\mathrm{max}}>0$, where $\xi_{\mathrm{max}}=\max_{1\leq i\leq N}\xi_i$, $\xi_{\mathrm{min}}=\min_{1\leq i\leq N}\xi_i$, and
$\lambda_{\mathrm{max}}=\max_{1\leq i\leq N}\lambda_i<0$, it holds,
for every $t>s\geq 0$,
\begin{equation}\label{ine-1}
\E[\rho(|X_t-Y_t|)]\leq C_2\E[\rho(|X_s-Y_s|)]e^{-\alpha (t-s)}.
\end{equation}
\end{lem}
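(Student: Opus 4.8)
The idea is to convert the spectral condition $\lambda=(Q+\diag(\beta_1,\dots,\beta_N))\xi\ll0$ into an exponential Lyapunov estimate for the joint Markov process $Z_t:=(X_t,Y_t,\La_t)$. Since $\La_0=\La_0'$, the coupling (\ref{jump-coup}) forces $\La_t=\La_t'$ for all $t$; consequently $(X_t,Y_t)$ evolves according to the coupling operator $\tilde L^{(\La_t)}$ of $L^{(\La_t)}$ with itself, while $(\La_t)$ is a Markov chain with $Q$-matrix $Q$. Take the Lyapunov function
$$f(x,y,k):=\xi_k\,\rho(|x-y|),\qquad (x,y,k)\in\R^d\times\R^d\times\S,$$
and let $\mathcal A$ denote the generator of $Z_t$. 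For $x\neq y$,
$$\mathcal A f(x,y,k)=\xi_k\,\tilde L^{(k)}\rho(|x-y|)+\rho(|x-y|)\sum_{m\in\S}q_{km}(\xi_m-\xi_k).$$

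The first step is a pointwise bound on $\mathcal A f$. By (A1), $\tilde L^{(k)}\rho(|x-y|)\le\beta_k\rho(|x-y|)$; since $Q$ is conservative, $\sum_m q_{km}=0$, so the second term equals $\rho(|x-y|)(Q\xi)_k$. Adding,
$$\mathcal A f(x,y,k)\le\rho(|x-y|)\big[\beta_k\xi_k+(Q\xi)_k\big]=\lambda_k\,\rho(|x-y|).$$
Using $\lambda_k\le\lambda_{\mathrm{max}}<0$ and $\rho\ge0$, together with $\rho(|x-y|)=f(x,y,k)/\xi_k\ge f(x,y,k)/\xi_{\mathrm{max}}$, we obtain
$$\mathcal A f(x,y,k)\le\lambda_{\mathrm{max}}\,\rho(|x-y|)\le\frac{\lambda_{\mathrm{max}}}{\xi_{\mathrm{max}}}\,f(x,y,k)=-\alpha\,f(x,y,k).$$

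The second step integrates this inequality. Fixing a deterministic initial point $z=(x,y,k)$ and applying It\^o's formula to $e^{\alpha u}f(Z_u)$, the drift is $\le0$ by the previous display, so that, with $\tau_n:=\inf\{u:|X_u|+|Y_u|\ge n\}\uparrow\infty$ (by the non-explosion ensured by $1^\circ$--$3^\circ$), the stopped process $e^{\alpha(u\wedge\tau_n)}f(Z_{u\wedge\tau_n})$ is a bounded supermartingale. Taking expectations, letting $n\to\infty$ and using Fatou's lemma (the needed integrability following from $\E[\rho(|X_u-Y_u|)]\le\rho'(0)\,\E[|X_u|+|Y_u|]<\infty$, a consequence of the linear growth $2^\circ$ or of (A2)), one gets $\E_z[f(Z_t)]\le e^{-\alpha t}f(z)$. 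By the Markov property this upgrades to $\E[f(Z_t)\mid\mathcal F_s]\le e^{-\alpha(t-s)}f(Z_s)$, hence $\E[f(Z_t)]\le e^{-\alpha(t-s)}\E[f(Z_s)]$ for all $t>s\ge0$. Finally, since $\xi_{\mathrm{min}}\rho(|x-y|)\le f(x,y,k)\le\xi_{\mathrm{max}}\rho(|x-y|)$,
$$\xi_{\mathrm{min}}\E[\rho(|X_t-Y_t|)]\le\E[f(Z_t)]\le e^{-\alpha(t-s)}\E[f(Z_s)]\le\xi_{\mathrm{max}}e^{-\alpha(t-s)}\E[\rho(|X_s-Y_s|)],$$
which is (\ref{ine-1}) with $C_2=\xi_{\mathrm{max}}/\xi_{\mathrm{min}}$.

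The main obstacle is the non-smoothness of $(x,y)\mapsto\rho(|x-y|)$ on the diagonal $\{x=y\}$, where $\mathcal A f$ is not classically defined and It\^o's formula does not directly apply. This is dealt with via the standing convention (stated just after (A2)) that the coupled diffusion is absorbed upon meeting, so that $f(Z_u)\equiv0$ for $u$ past the first meeting time and It\^o's formula is only ever used on the region where $\rho(|\cdot|)\in C^2$; the accompanying localization and integrability bookkeeping are then routine. It is worth noting that only (A1) and the existence of $\xi\gg0$ with $\lambda\ll0$ are actually used in this lemma, the full strength of (A2)--(A3) being needed elsewhere in the proof of Theorem \ref{main-1}.
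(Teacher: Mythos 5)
Your proposal is correct and follows essentially the same route as the paper: both hinge on the observation that $\La_t=\La_t'$ for all $t$ once $\La_0=\La_0'$, the pointwise bound $\xi_k\tilde L^{(k)}\rho(|x-y|)+(Q\xi)_k\,\rho(|x-y|)\le\lambda_k\rho(|x-y|)\le\frac{\lambda_{\mathrm{max}}}{\xi_{\mathrm{max}}}\,\xi_k\rho(|x-y|)$ coming from (A1) and $\lambda\ll0$, and the comparison $\xi_{\mathrm{min}}\rho\le\xi_k\rho\le\xi_{\mathrm{max}}\rho$. The only cosmetic difference is that the paper integrates this via It\^o plus Gronwall's inequality in differential form, whereas you phrase it as an exponential supermartingale bound for $e^{\alpha u}\xi_{\La_u}\rho(|X_u-Y_u|)$; these are equivalent, and your extra remarks on localization, the diagonal singularity, and the fact that only (A1) and the existence of $\xi$ are needed are all consistent with the paper.
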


\begin{proof}
Since $\La_0=\La'_0$, by our construction of coupling  process $(\La_t,\La'_t)$, $\La_t=\La_t'$ for all $t>0$.
As $N<\infty$ and $\lambda\ll 0$, we have $ \lambda_{\mathrm{max}}=\max_{1\leq i\leq N} \lambda_i<0$.
Applying It\^o's formula (cf. \cite{Sko}),
we get
\begin{align*}
&\E[\rho(|X_t-Y_t|)\xi_{\La_t}]\\
&=\E[\rho(|X_s-Y_s|)\xi_{\La_s}]+\E\Big[\int_s^t(Q\xi)(\La_r)\rho(|X_r-Y_r|)+\tilde L^{(\La_r)} \rho(|X_r-Y_r|)\xi_{\La_r}\,\d r\Big]\\
&\leq \E[\rho(|X_s-Y_s|)\xi_{\La_s}]+\E\Big[\int_s^t\big((Q\xi)(\La_r)+\beta_{\La_r}\xi_{\La_r}\big)\rho(|X_r-Y_r|)\,\d r\Big]\\
&\leq \E[\rho(|X_s-Y_s|)\xi_{\La_s}]+ \lambda_{\mathrm{max}} \E\Big[\int_s^t \rho(|X_r-Y_r|)\d r\Big].
\end{align*}
This implies that
\begin{align*}
&\E[\rho(|X_t-Y_t|)\xi_{\La_t}]\leq \E[\rho(|X_s-Y_s|)\xi_{\La_s}]+\frac{ \lambda_{\mathrm{max}}}{\xi_{\mathrm{max}}}\E\Big[\int_s^t\rho(|X_r-Y_r|)\xi_{\La_r}\d r\Big].
\end{align*} The previous inequality still holds if we replace $s$ with $u$ satisfying $s<u<t$. So we can apply Gronwall's inequality in the differential form to obtain
\begin{equation*}
\E[\rho(|X_t-Y_t|)\xi_{\La_t}]\leq \E[\rho(|X_s-Y_s|)\xi_{\La_s}]e^{\lambda_{\mathrm{max}}(t-s)/\xi_{\mathrm{max}}},
\end{equation*}
and further
\begin{equation*}
\E[\rho(|X_t-Y_t|)]\leq \frac{\xi_{\mathrm{max}}}{\xi_{\mathrm{min}}}\E[\rho(|X_s-Y_s|)]e^{ \lambda_{\mathrm{max}}(t-s)/\xi_{\mathrm{max}}},
\end{equation*}
which yields the inequality (\ref{ine-1}).
\end{proof}

\noindent\textbf{Proof of Theorem \ref{main-1}.}\quad
Now we fix the initial point of the process $(X_t,\La_t,Y_t,\La_t')$ to be $(x,i,y,j)$ with $i\neq j$,
and go to estimate the Wasserstein distance between the distributions of $(X_t,\La_t)$ and $(Y_t,\La_t')$.
By the inequality (\ref{theta}) and Lemma \ref{lem-1}, we obtain
\begin{align*}\label{est-1}
&\E[\tilde \rho((X_t,\La_t), (Y_t,\La_t'))]\\ &=\E\big[\sqrt{\mathbf 1_{\La_t\neq \La'_t}+\rho(|X_t-Y_t|)}\mathbf 1_{\tau>t/2}\big]+\E\big[\sqrt{\rho(|X_t-Y_t|)}\mathbf 1_{\tau\leq t/2}\big]\\
&\leq \sqrt{\p(\tau>t/2)}\sqrt{\E[1+\rho(|X_t-Y_t|)]}+\sqrt{\E[\rho(|X_t-Y_t|)\mathbf 1_{\tau\leq t/2}]}\\
&\leq \sqrt{1+\E[\rho(|X_t|)+\rho(|Y_t|)]} e^{-\frac{\theta t}{4}}+\sqrt{\E[\E[\rho(|X_t-Y_t|)\big|\mathscr F_\tau]\mathbf 1_{\tau\leq t/2}]}\\
&\leq \sqrt{1+C_1(2+\rho(|x|)+\rho(|y|)) } e^{-\frac{\theta t}{4}}+\sqrt{C_2\E[\rho(|X_\tau-Y_\tau|)e^{\frac{-\alpha t}{2}}]}\\
&\leq \sqrt{1+C_1(2+\rho(|x|)+\rho(|y|)) }e^{-\frac{\theta t}{4}}+\sqrt{C_2C_1(2+\rho(|x|)+\rho(|y|)) }e^{\frac{-\alpha t}{4}}\\
&\leq 2\tilde C\sqrt{3+\rho(|x|)+\rho(|y|)}e^{ -\tilde \alpha t },
\end{align*} where $\tilde C=\max\{C_1, C_2,1\}$ independent of $(x,i,y,j)$ and $\tilde \alpha=\min\{\alpha,\theta\}/4>0$.
This implies
\begin{equation}\label{wass-ine-2}
W_{\tilde \rho}(\delta_{(x,i)}P_t,\delta_{(y,j)}P_t)\leq 2\tilde C\sqrt{3+\rho(|x|)+\rho(|y|)} e^{-\tilde \alpha  t}.
\end{equation}
%
By (A2), we know that $\E[\rho(|X_t|)]$ is bounded for all $t>0$. This yields that the family of probability measures $(\delta_{(x,i)}P_t)_{t>0}$ is weakly compact since for each $c>0$,  $\{x\in\R^d;\ \rho(|x|)\leq c\}$ is a compact set. Moreover,
\[\lim_{K\ra \infty}\sup_{t}\!\sum_{j\in\S}\int_{\tilde \rho((y,j),(x,i))\geq\! K} \!\!\!\tilde \rho((y,j),(x,i))\delta_{(x,i)}P_t(\d x,j)\leq \lim_{K\ra\infty}\sup_t\frac{\E[1+\rho(|x|)+\rho(|X_t|)]}{K}=0.\]
Hence, $(\delta_{(x,i)}P_t)_{t>0}$ is also compact in the Wassestein distance $W_{\tilde \rho}$.
There exists a subsequence $\delta_{(x,i)}P_{t_k}$, $t_k\ra \infty$ as $k\ra \infty$, converging in $W_{\tilde \rho}$-metric to some probability measure $\nu$ on $\R^d\times\S$. Inequality (\ref{wass-ine-2}) implies that for all $(y,j)\in \R^d\times \S$, $\delta_{(y,j)}P_{t_k}$ converges in $W_{\tilde \rho}$-metric to $\nu$, and further that $\nu_0P_{t_k}$ converges in $W_{\tilde \rho}$-metric to $\nu$ for every probability measure $\nu_0$ on $\R^d\times \S$ with $\displaystyle \int_{\R^d}\sqrt{\rho(|x|)}\nu_0(\d x,\S)<\infty$.
This yields that for each $s>0$, $\delta_{(x,i)}P_sP_{t_k}$ converges in $W_{\tilde \rho}$ to $\nu$. It is easy to see $\delta_{(x,i)}P_{t_k}P_s$ converges weakly to $\nu P_s$, hence converges in $W_{\tilde \rho}$-metric to $\nu P_s$. In all, we get for each $s>0$, $\nu P_s=\nu$, then $\nu$ is a stationary distribution of $P_t$.
As $(\delta_{(x,i)}P_{t_k})$ converges weakly to $\nu$, by (A2), we get by Fatou's lemma
\begin{equation}\label{supp-nu}
\int_{\R^d\times \S}\rho(|y|)\d \nu \leq \liminf_{k\ra\infty}\int_{\R^d\times \S}\rho(|y|)\d\big(\delta_{(x,i)}P_{t_k}\big)=\liminf_{k\ra \infty}\E[\rho(|X_t|)]\leq C_1.
\end{equation}
By (\ref{wass-ine-2}),
\begin{align*}
&W_{\tilde\rho}(\delta_{(x,i)}P_t,\nu)=W_{\tilde \rho}(\delta_{(x,i)}P_t,\nu P_t)\\
&=\sup_{\varphi:\mathrm{Lip}(\varphi)\leq 1}\Big\{\int_{\R^d\times \S} \varphi(y,j)\d \big(\delta_{(x,i)}P_t\big)-\int_{\R^d\times \S}\varphi(y,j)\d \big(\nu P_t\big)\Big\}\\
&\leq \int_{\R^d\times \S}\nu(\d y, j) W_{\tilde \rho}(\delta_{(x,i)}P_t,\delta_{(y,j)}P_t)\\
&\leq 2\tilde C(\sqrt{3+\rho(|x|)}+ \sqrt{C_1}) e^{-\tilde \alpha t}.
\end{align*}
Here we have used the duality formula for the Wasserstein distance, and
 \[\mathrm{Lip}(\varphi):=\sup\Big\{\frac{\varphi(y,j)-\varphi(z,k)}{\tilde \rho((y,j),(k,l))};\ (y,j)\neq (z,k) \Big\}.\]
Till now, we have completed the proof of this theorem.

\begin{thm}\label{t-f-m}
Assume that (A1) and (A2) hold. If the matrix $-\big(Q+\diag(\beta_1,\ldots,\beta_N)\big)$ is a nonsingular M-matrix, then there exists a probability measure $\nu$ on $\R^d\times \S$ such that
\begin{equation}\label{exp-ine-1}
W_{\tilde\rho}(\delta_{(x,i)}P_t,\nu)\leq 2\tilde C(\sqrt{3+\rho(|x|)}+\tilde C)   e^{-\tilde \alpha t },
\end{equation}
The constants $\tilde \alpha$ and  $\tilde C$ are defined  in Theorem \ref{main-1}.
\end{thm}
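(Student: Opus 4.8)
The plan is to obtain Theorem \ref{t-f-m} as an immediate corollary of Theorem \ref{main-1}; the only new input is the list of equivalent characterizations of nonsingular M-matrices recorded in Proposition \ref{m-matrix}. Set $A:=-\big(Q+\diag(\beta_1,\ldots,\beta_N)\big)$, which by hypothesis is a nonsingular $N\times N$ M-matrix.

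First I would apply item (4) of Proposition \ref{m-matrix} (semipositivity) to $A$: there exists a vector $\xi\gg0$ in $\R^N$ such that $A\xi\gg0$. Rewriting this in terms of the original matrices, and putting $\lambda:=\big(Q+\diag(\beta_1,\ldots,\beta_N)\big)\xi=-A\xi$, we obtain $\lambda\ll0$. Thus the vector $\xi$ furnished by the M-matrix property is exactly a vector satisfying the structural hypothesis of Theorem \ref{main-1}.

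Then, with (A1), (A2) (and (A3)) in force and this $\xi$ at our disposal, Theorem \ref{main-1} applies directly and produces a probability measure $\nu$ on $\R^d\times\S$ with
\[
W_{\tilde\rho}(\delta_{(x,i)}P_t,\nu)\le 2\tilde C\big(\sqrt{3+\rho(|x|)}+\tilde C\big)e^{-\tilde\alpha t},
\]
where $\tilde\alpha=\min\{\alpha,\theta\}/4$ and $\tilde C=\max\{C_1,C_2,1\}$ are the constants of Theorem \ref{main-1}, determined through $C_2=\xi_{\mathrm{max}}/\xi_{\mathrm{min}}$, $\alpha=-\lambda_{\mathrm{max}}/\xi_{\mathrm{max}}$ and $\theta$ from (\ref{theta}). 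This is precisely (\ref{exp-ine-1}), so the proof is complete.

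There is no genuine analytic difficulty in this argument: it simply repackages Theorem \ref{main-1} into a hypothesis that is far easier to check in practice (one can instead use criterion (2) of Proposition \ref{m-matrix}, positivity of all leading principal minors, or criterion (3), positivity of every real eigenvalue). The only points that require attention are bookkeeping ones — keeping the sign convention straight so that ``$A$ is a nonsingular M-matrix'' is translated into ``$(Q+\diag(\beta_1,\ldots,\beta_N))\xi\ll0$'' rather than its negation, and noting that, although different admissible choices of the semipositive vector $\xi$ change the numerical values of $\alpha$ and $C_2$ (hence of $\tilde\alpha$ and $\tilde C$), the resulting rate $\tilde\alpha$ is always strictly positive, so the exponential ergodicity conclusion does not depend on the particular $\xi$ chosen.
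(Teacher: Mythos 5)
Your proposal is correct and coincides with the paper's own proof: both invoke the semipositivity characterization in Proposition \ref{m-matrix} to extract a vector $\xi\gg 0$ with $\big(Q+\diag(\beta_1,\ldots,\beta_N)\big)\xi\ll 0$, and then apply Theorem \ref{main-1} verbatim. No further comment is needed.
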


\begin{proof}
By Proposition \ref{m-matrix},
since $-(\diag(\beta_1,\ldots,\beta_N)+Q)$ is a nonsingular M-matrix, there exists a vector $\xi=(\xi_1,\ldots, \xi_N)^\ast\gg 0$ such that
\[\lambda:=(\diag(\beta_1,\ldots,\beta_N)+Q)\xi\ll 0.
\]
According to Theorem \ref{main-1}, the desired results hold.
\end{proof}

Next, we assume that $(\La_t)$ is reversible with $\pi=(\pi_i)$ being its reversible probability measure. So it holds $\pi_iq_{ij}=\pi_j q_{ji}$, $i,\,j\in \S$.
Let $L^2(\pi)=\{f\in\mathscr B(\S);\ \sum_{i=1}^N \pi_if_i^2<\infty\}$, and denote by $\|\cdot\|$ and $\la\cdot, \cdot\raa$ respectively the norm and inner product in $L^2(\pi)$.
Let
\begin{equation}\label{Dirichlet}
D(f)=\frac 12\sum_{i,j=1}^N\pi_iq_{ij}(f_j-f_i)^2-\sum_{i=1}^N\pi_i\beta_if_i^2,\quad f\in L^2(\pi),
\end{equation}
where $(\beta_i)$ is given by condition (A1).
We borrow the notation $D(f)$ from the Dirichlet theory for continuous time Markov chain, but we should note that in our case $\beta_i$ could be positive, so $D(f)$ may be negative which is different to the standard Dirichlet theory.
Define the principal eigenvalue by
\begin{equation}\label{prin-eig}
\lambda_0=\inf\big\{D(f);\ f\in L^2(\pi),\ \|f\|=1\big\}.
\end{equation}
\begin{thm}\label{prin}Let (A1)  and (A2) be satisfied and  assume that $(\La_t)$ is reversible with respect to the probability measure $(\pi_i)$. Assume the principal eigenvalue $\lambda_0>0$. 
Then there are positive constants $\tilde C$, $\tilde \alpha$ and a probability measure $\nu$ on $\R^d\times \S$ so that
\begin{equation}\label{exp}
W_{\tilde\rho}(\delta_{(x,i)}P_t,\nu)\leq 2\tilde C(\sqrt{3+\rho(|x|)}+\tilde C)  e^{-\tilde \alpha t }.
\end{equation}
\end{thm}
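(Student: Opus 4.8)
The plan is to reduce Theorem \ref{prin} to Theorem \ref{t-f-m} (equivalently Theorem \ref{main-1}) by showing that the hypothesis $\lambda_0>0$ forces $-\big(Q+\diag(\beta_1,\ldots,\beta_N)\big)$ to be a nonsingular M-matrix.

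First I would rewrite the quadratic form $D$ in operator form. Since $Q$ is conservative ($\sum_j q_{ij}=0$) and reversible ($\pi_iq_{ij}=\pi_jq_{ji}$), expanding the square in (\ref{Dirichlet}) gives the standard Dirichlet-form identity $\frac12\sum_{i,j}\pi_iq_{ij}(f_j-f_i)^2=-\sum_{i,j}\pi_iq_{ij}f_if_j=\langle f,-Qf\rangle$, where $\langle\cdot,\cdot\rangle$ is the $L^2(\pi)$ inner product. Hence
\[
D(f)=\langle f,Mf\rangle,\qquad M:=-\big(Q+\diag(\beta_1,\ldots,\beta_N)\big).
\]
Because $Q$ is $\pi$-symmetric and $\diag(\beta_1,\ldots,\beta_N)$ is trivially symmetric, $M$ is self-adjoint on $L^2(\pi)$; therefore $\lambda_0=\inf_{\|f\|=1}\langle f,Mf\rangle$ is precisely the least eigenvalue of $M$, and all eigenvalues of $M$ are real. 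Thus the assumption $\lambda_0>0$ is equivalent to $M$ being positive definite on $L^2(\pi)$; in particular every eigenvalue of $M$ is positive.

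Next I would observe that $M$ is a $Z$-matrix: its off-diagonal entries are $-q_{ij}\leq 0$ for $i\neq j$. A $Z$-matrix all of whose real eigenvalues are positive is a nonsingular M-matrix by Proposition \ref{m-matrix}(3). (If one prefers genuinely symmetric matrices, conjugate by $D=\diag(\sqrt{\pi_1},\ldots,\sqrt{\pi_N})$: the matrix $\tilde M:=DMD^{-1}$ is symmetric in the usual sense, has the same spectrum as $M$, and still has non-positive off-diagonal entries, so it is a Stieltjes matrix, hence a nonsingular M-matrix; and being a nonsingular M-matrix is invariant under diagonal similarity with positive diagonal.) Consequently $-\big(Q+\diag(\beta_1,\ldots,\beta_N)\big)=M$ is a nonsingular M-matrix, and Theorem \ref{t-f-m} applies verbatim, producing a probability measure $\nu$ on $\R^d\times\S$ and constants $\tilde\alpha,\tilde C>0$ for which (\ref{exp}) holds.

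There is no substantial analytic obstacle here; the only points demanding care are the bookkeeping in the reversibility identity for $D$ and the remark that the M-matrix property is insensitive to the $\pi$-weighting — equivalently, that positive definiteness of a $Z$-matrix with respect to \emph{some} inner product already forces its whole spectrum into $(0,\infty)$. Everything else is an immediate appeal to the characterizations collected in Proposition \ref{m-matrix} and to Theorem \ref{t-f-m}.
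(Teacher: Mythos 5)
Your proof is correct, but it takes a genuinely different route from the paper's. You reduce to Theorem \ref{t-f-m} through the spectral characterization: reversibility makes $M=-\big(Q+\diag(\beta_1,\ldots,\beta_N)\big)$ self-adjoint on $L^2(\pi)$, the identity $D(f)=\la f,Mf\raa$ shows $\lambda_0$ is the least eigenvalue of $M$, and since $M$ is a $Z$-matrix (off-diagonal entries $-q_{ij}\le 0$) all of whose eigenvalues are real and positive, Proposition \ref{m-matrix}(3) makes it a nonsingular M-matrix; Theorem \ref{t-f-m} then finishes. The paper instead works directly with a minimizer $g$ of the Rayleigh quotient: it shows $D(f)\ge D(|f|)$ forces $g\ge 0$, derives the Euler--Lagrange equation $Qg(k)+\beta_kg_k=-\lambda_0g_k$ from the first-order perturbation $\tilde g_k=g_k+\veps$, upgrades $g\ge 0$ to $g\gg 0$ by irreducibility of $Q$, and feeds $\xi=g$ (which satisfies $(Q+\diag(\beta))\xi=-\lambda_0\xi\ll 0$) into Theorem \ref{main-1}. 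The two arguments establish the same equivalence --- indeed the Remark following the theorem records exactly the statement you prove --- but they buy different things: yours is shorter and makes the role of Proposition \ref{m-matrix}(3) transparent, while the paper's variational construction exhibits the explicit ground state as the vector $\xi$ and is phrased so as to have a chance of extending to $N=\infty$, where the M-matrix characterization is no longer available. Two small points you handle but should keep explicit: Proposition \ref{m-matrix} is a list of equivalences valid for $Z$-matrices, so the observation that $M$ has non-positive off-diagonal entries is genuinely needed before invoking item (3); and self-adjointness with respect to the $\pi$-weighted inner product suffices to conclude the spectrum of the \emph{matrix} $M$ is real and positive, since eigenvalues are independent of the choice of inner product.
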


\begin{proof}
As $N<\infty$ and $\lambda_0>0$, there exists a $g\in L^2(\pi)$ such that $g\not \equiv 0$, $D(g)=\lambda_0\|g\|^2$.
We shall show that  $g\gg 0$ and $Qg(i)+\beta_ig_i=-\lambda_0 g_i$, $i\in\S$, then this theorem follows immediately from Theorem \ref{main-1} by taking $\xi=g$.
We use the variational method in \cite{Ch-00}.
It is easy to check $D(f)\geq D(|f|)$, so it must hold $g\geq 0$. For a fixed $k\in \S$, let $\tilde g_i=g_i$ for $i\neq k$ and $\tilde g_k=g_k+\veps$.
It holds $Q\tilde g(i)=Qg(i)+\veps q_{ik}$ for $i\neq k$ and $Q\tilde g(k)=Qg(k)-\veps q_k$.
We have
\begin{align*}
D(\tilde g)&=\la \tilde g,-Q\tilde g\raa-\sum_{i=1}^N\pi_i\beta_i\tilde g_i^2\\
&=\la  g, -Qg\raa-\sum_{i=1}^N\pi_i \beta_i  g_i^2 +2\veps \pi_k(-Qg)(k) -2\veps\pi_k\beta_kg_k-\veps^2\pi_k(q_k-\beta_k),
\end{align*}where we have used $\pi_iq_{ik}=\pi_kq_{ki}$.
Because $D(\tilde g)\geq \lambda_0\|\tilde g\|^2$ and $D(g)=\lambda_0\|g\|^2$, we get
\begin{equation*}
-2\veps \pi_k\big(\lambda_0g_k+Qg(k)+\beta_kg_k\big)+\veps^2\pi_k(g_k-\beta_k)-2\lambda_0\veps^2\pi_k\geq 0.
\end{equation*}
This yields $Qg(k)+\beta_kg_k=-\lambda_0g_k$ since $\veps$ is arbitrary, and then $Qg(i)+\beta_ig_i=-\lambda_0 g_i$ for each $i\in\S$ since $k$ is arbitrary.

Since $g\not\equiv 0$ and $g\geq 0$, there exists $k$ such that $g_k>0$. If $q_{ik}>0$, then
\[0<q_{ik}g_k\leq \sum_{j\neq i}q_{ij}g_j=(q_i-\beta_i-\lambda_0)g_i,\]
so $g_i>0$ and $q_i-\beta_i-\lambda_0>0$. As $Q$ is irreducible, by an inductive procedure, we can prove that $g_i>0$ for every $i\in \S$.
\end{proof}

\begin{rem}
  According to the argument of Theorem \ref{prin} and the statement 3 of Proposition \ref{m-matrix}, we obtain that $\lambda_0>0$ is equivalent to the statement $-(Q+\diag(\beta))$ is a nonsingular M-matrix when $(\La_t)$ is reversible process in a finite state space, i.e. $N<\infty$. However, the criterion expressed by the principal eigenvalue $\lambda_0$ of a bilinear form can be extended directly to deal with the situation $N=\infty$ and the criterion expressed by nonsingular M-matrix can not. To apply the criterion expressed by the principal eigenvalue,  one has to justify the positiveness of $\lambda_0$ which is not easy when $N=\infty$. But this is not the main topic of present work, and we are satisfied with this connection at present stage and leave further study of $\lambda_0$ to another work.
\end{rem}

If we use the metric $\tilde \rho_p$ on $\R^d\times \S$, we can recover the condition given by \cite[Theorem 1.4]{CH} to justify the exponential ergodicity of $(X_t,\La_t)$.
The advantage of this criterion (see (\ref{exp-erg-1}) below) is that it has very concise expression, and the disadvantage  is that we can not fix explicitly the power $p$.
\begin{thm}\label{t-f-m-b}
Assume that (A1)  and (A2) hold. Let $\mu=(\mu_i)_{i\in \S}$ be the invariant probability measure of $(q_{ij})$. If
\begin{equation}\label{exp-erg-1}
\sum_{i=1}^N\mu_i\beta_i<0,
\end{equation}
then there exist positive constants $p$, $\alpha_p$, $\tilde C_1$,   such that
\begin{equation}\label{exp-erg-2}
W_{\tilde\rho_p}(\delta_{(x,i)}P_t,\nu)\leq 2\tilde C_1(\sqrt{3+\rho(|x|)}+\tilde C_1) e^{-\alpha_p t},
\end{equation}
where $\tilde C_1$ is independent of  $(x,i)$.
\end{thm}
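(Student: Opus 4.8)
The plan is to derive (\ref{exp-erg-2}) from Theorem \ref{main-1} applied with the cost function $\rho$ replaced by $\hat\rho:=\rho^p$ for a sufficiently small $p\in(0,1)$. Two elementary observations make this reduction legitimate. First, the metric on $\R^d\times\S$ built from $\hat\rho$ via (\ref{dist-1}) is precisely $\tilde\rho_p$ of (\ref{dist-2}), so a bound on $W_{\widetilde{\hat\rho}}$ is a bound on $W_{\tilde\rho_p}$. Second, $\hat\rho$ still satisfies (\ref{con-rho}) when $p\in(0,1)$: indeed $(\rho^p)'=p\rho^{p-1}\rho'>0$, $(\rho^p)''=p(p-1)\rho^{p-2}(\rho')^2+p\rho^{p-1}\rho''\le 0$, and $\rho^p\ra\infty$. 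Thus it suffices to check the hypotheses of Theorem \ref{main-1} for $\hat\rho$, i.e.\ (A1) and (A2) with $\hat\rho$ in place of $\rho$, and to exhibit a vector $\xi\gg0$ with $\big(Q+\diag(p\beta_1,\ldots,p\beta_N)\big)\xi\ll0$.

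\emph{Verifying (A1) and (A2) for $\hat\rho$.} I would keep the same coupling operators $\tilde L^{(i)}$ furnished by (A1) for $\rho$. Writing $w(x,y)=\rho(|x-y|)$, which is $C^2$ off the diagonal, and recalling that $\tilde L^{(i)}\sim(a_i(x,y),\cdot)$ with $a_i(x,y)\ge0$, the chain rule gives for $x\ne y$
\[
\tilde L^{(i)}w^p=p\,w^{p-1}\,\tilde L^{(i)}w+\frac12 p(p-1)\,w^{p-2}\,\langle\nabla w,a_i\nabla w\rangle\le p\,w^{p-1}\,\tilde L^{(i)}w\le p\beta_i\,w^p,
\]
where the first inequality uses $p(p-1)<0$ and $\langle\nabla w,a_i\nabla w\rangle\ge0$, and the second multiplies (A1) for $\rho$ by the nonnegative factor $p\,w^{p-1}$. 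Hence (A1) holds for $\hat\rho$ with constants $\hat\beta_i=p\beta_i$. For (A2), the elementary bound $r^p\le1+r$ ($r\ge0$, $p\in(0,1)$) combined with (A2) for $\rho$ gives $\E[\rho^p(|X_t|)]\le1+\E[\rho(|X_t|)]\le\hat C_1\big(1+\E[\rho(|X_0|)]e^{-\gamma t}\big)$; for $X_0=x$ this is exactly the moment bound the proof of Theorem \ref{main-1} uses, and it is this step that makes $\rho(|x|)$ rather than $\rho^p(|x|)$ appear on the right of (\ref{exp-erg-2}).

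\emph{Constructing $\xi$ via Perron--Frobenius.} This is where hypothesis (\ref{exp-erg-1}) enters and is the heart of the argument. For $c\ge0$ set $M_c:=Q+c\,\diag(\beta_1,\ldots,\beta_N)$; each $M_c$ has nonnegative off-diagonal entries and, $Q$ being irreducible, is irreducible, so $M_c+c'I\ge0$ is nonnegative and irreducible for $c'$ large, and the Perron--Frobenius theorem furnishes a simple real eigenvalue $\eta(c)$ of $M_c$, equal to the largest real part among its eigenvalues, with a strictly positive eigenvector $\xi^{(c)}\gg0$. At $c=0$ we have $M_0=Q$, $\eta(0)=0$, with right eigenvector $\mathbf 1$ and left eigenvector $\mu$ normalized by $\mu^\ast\mathbf 1=\sum_i\mu_i=1$; first-order perturbation theory for the simple eigenvalue then shows that $c\mapsto\eta(c)$ is smooth near $0$ with $\eta'(0)=\mu^\ast\diag(\beta_1,\ldots,\beta_N)\mathbf 1=\sum_{i=1}^N\mu_i\beta_i<0$. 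Consequently $\eta(c)<0$ for all small $c>0$; fix such a $c$ with $c<1$, set $p:=c$ and $\xi:=\xi^{(p)}$, so that $\big(Q+\diag(p\beta_1,\ldots,p\beta_N)\big)\xi=\eta(p)\xi\ll0$.

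\emph{Conclusion and main obstacle.} With the hypotheses of Theorem \ref{main-1} verified for $\hat\rho$ and $\xi$ as above, that theorem produces a probability measure $\nu$ on $\R^d\times\S$ with $W_{\tilde\rho_p}(\delta_{(x,i)}P_t,\nu)\le2\tilde C\big(\sqrt{3+\rho^p(|x|)}+\tilde C\big)e^{-\tilde\alpha t}$, and then $\rho^p(|x|)\le1+\rho(|x|)$ together with $\sqrt{4+\rho(|x|)}\le\sqrt{3+\rho(|x|)}+1$ upgrades this to (\ref{exp-erg-2}) with $\alpha_p:=\tilde\alpha$ and a constant $\tilde C_1$ depending only on $p$, the constants of (A1)--(A2) and the coupling rate $\theta$ of $(\La_t)$, hence independent of $(x,i)$. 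The genuinely delicate point is the perturbation step: identifying $\eta'(0)$ with $\sum_i\mu_i\beta_i$ and deducing $\eta(c)<0$ for small $c$, while simultaneously keeping $p=c<1$ so that the chain-rule bound for $\rho^p$ in (A1) stays valid; the remaining verifications are routine bookkeeping.
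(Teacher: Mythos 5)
Your proposal is correct and follows essentially the same route as the paper: the same chain-rule bound $\tilde L^{(i)}\rho^p\le p\beta_i\rho^p$ for $0<p<1$, and the same Perron--Frobenius construction of a positive vector $\xi$ with $\big(Q+p\,\diag(\beta)\big)\xi\ll0$ for small $p$, the only cosmetic differences being that you carry out the eigenvalue perturbation at $p=0$ by hand where the paper cites \cite[Proposition 4.2]{BGM}, and that you package the conclusion as Theorem \ref{main-1} applied to the cost $\rho^p$ rather than rewriting the coupling estimate. Your explicit remark that (A2) is used with $\rho(|x|)$ (not $\rho^p(|x|)$) on the right, which is why $\rho(|x|)$ appears in (\ref{exp-erg-2}), matches what the paper actually does.
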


\begin{proof}
Let $Q_p=Q+p\,\diag(\beta_1,\ldots,\beta_N)$, and
\[\eta_p=-\max_{\gamma\in \mathrm{spec}(Q_p)}\mathrm{Re}\,\gamma, \quad \text{where $\mathrm{spec}(Q_p)$ denotes the spectrum of $Q_p$}.\]
Let $Q_{(p,t)}=e^{tQ_p}$, then the spectral radius $\mathrm{Ria}(Q_{(p,t)})$ of $Q_{(p,t)}$ equals to $e^{-\eta_p t}$. Since all coefficients of $Q_{(p,t)}$ are positive, Perron-Frobenius theorem (see \cite[Chapter 2]{BP}) yields $-\eta_p$ is a simple eigenvalue of $Q_p$. Moreover, note that the eigenvector of $Q_{(p,t)}$ corresponding to $e^{-\eta_p t}$ is also an eigenvector of $Q_p$ corresponding to $-\eta_p$. Then Perron-Frobenius theorem ensures that there exists an eigenvector $\xi\gg 0$ of $Q_p$ corresponding to $-\eta_p$. Now applying Proposition 4.2 of \cite{BGM} (by replacing $A_p$ there with   $Q_p$), if $\sum_{i=1}^N\mu_i\beta_i<0$, then there exists some $p_0>0$ such that $\eta_p>0$ for any $0<p<p_0$. Fix a $p$ with $0<p<\min\{1,p_0\}$ and an eigenvector $\xi\gg 0$, then we obtain
\[Q_p\,\xi=(Q+p\,\diag(\beta_1,\ldots,\beta_N))\xi=-\eta_p\,\xi\ll 0.\]

For the coupling operator $\tilde L^{(i)}\sim(a^{(i)}(x,y),b^{(i)}(x,y))$, due to the nonnegative definiteness of $a^{(i)}(x,y)$ and $ 0<p<1$, it can be checked by direct calculus that
\[\tilde L^{(i)} \rho^p(|x-y|)\leq p \rho^{p-1}(|x-y|)\tilde L^{(i)}\rho(|x-y|),\quad x,\,y\in \R^d,\ x\neq y.\]
Combining with Assumption (A1), we get
\begin{equation}\label{oper-ine-p}
\tilde L^{(i)}\rho^p(|x-y|)\leq p\beta_i\rho^p(|x-y|),\quad x,\,y\in\R^d,\ x\neq y.
\end{equation}
Similar to the argument of Lemma \ref{lem-1}, if $\La_s=\La_s'$ for some $0\leq s<t$, by It\^o's formula, we obtain
\begin{align*}
&\E[\rho^p(|X_t-Y_t|)\xi_{\La_t}]\\
&\leq\E[\rho^p(|X_s-Y_s|)\xi_{\La_s}]+\E\Big[\int_s^t\big((Q+p\,\diag(\beta_1,\ldots,\beta_N))\xi\big)(\La_r)\rho^p(|X_r-Y_r|)\d r\Big]\\
&\leq \E[\rho^p(|X_s-Y_s|)\xi_{\La_s}]-\eta_p\E\Big[\int_s^t\rho^p(|X_r-Y_r|)\xi_{\La_r}\d r\Big].
\end{align*}
Due to the arbitrariness of $s$, $0\leq s<t$, we can apply  Gronwall's inequality in differential form to get
\begin{align*}
\E[\rho^p(|X_t-Y_t|)\xi_{\La_t}]\leq \E[\rho^p(|X_s-Y_s|)\xi_{\La_s}]e^{-\eta_p(t-s)}.
\end{align*}
Consequently,
\begin{equation}\label{ine-3}
\E[\rho^p(|X_t-Y_t|)]\leq C_3\E[\rho^p(|X_s-Y_s|)]e^{-\eta_p(t-s)},
\end{equation}
where $C_3=\max_{k,l\in \S} \big(\xi_k/ \xi_l\big)\geq 1$.

Now we go to estimate the Wasserstein distance between the distributions of $(X_t,\La_t)$ and $(Y_t,\La_t')$ with $(X_0,\La_0,Y_0,\La_0')=(x,i,y,j)$ and $i\neq j$.
\begin{align*}
&\E\big[\tilde\rho_p((X_t,\La_t),(Y_t,\La_t'))\big]\\
&=\E\big[\sqrt{\mathbf 1_{\La_t\neq \La_t'}+\rho^p(|X_t-Y_t|)}\mathbf 1_{\tau>t/2}\big]+\E\big[\sqrt{\rho^p(|X_t-Y_t|)}\mathbf 1_{\tau\leq t/2}\big]\\
&\leq \sqrt{\p(\tau>t/2)}\sqrt{1+\big(\E[\rho(|X_t-Y_t|)])^{p}}+\sqrt{\E[\rho^p(|X_t-Y_t|)\mathbf 1_{\tau\leq t/2}]}\\
&\leq 2\tilde C_1\sqrt{3+\rho(|x|)+\rho(|y|)} e^{- \alpha_p t },
\end{align*} where $\tilde C_1=\max\{C_1, C_3\}$, $ \alpha_p=\min\{\theta,\eta_p\}/4>0$.
This yields that
\begin{equation}\label{coup-ine-2}W_{\tilde \rho_p}(\delta_{(x,i)}P_t,\delta_{(y,i)}P_t)\leq 2\tilde C_1\sqrt{3+\rho(|x|)+\rho(|y|)} e^{- \alpha_p t }.
\end{equation}
As in the late part of the argument of Theorem \ref{main-1}, (\ref{coup-ine-2}) can yield the desired result. The proof is completed.
\end{proof}

\begin{rem}
From the argument of Theorem \ref{t-f-m-b}, we essentially use the Perron-Frobenius theorem to ensure the existence of a vector $\xi\gg 0$ such that
$\big(Q+p\,\diag(\beta_1,\ldots,\beta_N)\big)\xi\ll 0$. Moreover, in the proof of \cite[Proposition 4.2]{BGM} the fact $-\eta_p=\pi_pQ_p\mathbf 1=\pi_p\,\diag(\beta_1,\ldots,\beta_N)\mathbf 1$ has been used for the left eigenvector $\pi_p$ of $Q_p$ associated to $-\eta_p$ with $\pi_p\mathbf 1=1$. This prevents us from applying this method to regime-switching processes with a countable state space $\S$.
\end{rem}

\section{Markovian Switching in a countable state space}
In this section, we consider the regime-switching diffusion $(X_t,\La_t)$ given by (\ref{1.1}) and (\ref{1.2}) with state-independent switching in a countable set, i.e. $\S=\{1,2,\ldots,N\}$ and $N=\infty$. There few result on the ergodicity of regime-switching diffusion process when the state space $\S$ of $(\La_t)$ is a countable set. In this case, the criteria expressed by Lyapunov function or drift condition for general Markov processes still work. But it is well known that constructing Lyapunov functions is a difficult job even for diffusion processes.  To construct a Lyapunov function  for a regime-switching diffusion becomes more difficult due to the appearance of diffusion operator and jump operator in its infinitesimal generator at the same time. In this part, we put forward a method to transform the switching process $(\La_t)$ in a countable state space into a new one in a finite state space. By using the criterion established in previous section by M-matrix theory, we can guarantee that if the new regime-switching diffusion process is ergodic in Wasserstein distance, then so is the original one.

In this section, we assume that  (A1) holds, and   further  $M:=\sup_{i\in \S}\beta_i<\infty$. As $\S$ is a countable set, we need more assumption on $(\La_t)$ so that the coupling method could be applied.
\begin{itemize}
\item[(A3)] The $Q$-matrix of $(\La_t)$ is conservative irreducible and  $\sup_{i\in\S} q_i<\infty$.
There is a coupling process $(\La_t,\La_t')$ with operator $\tilde Q$ on $\S\times \S$. Suppose there is a bounded function $g\geq 0$ in the domain of $\tilde Q$ such that $g(i,i)=0$ and
\begin{equation}\label{c-o}\tilde Qg(i,j)\leq -1,\quad i\neq j.\end{equation}
\end{itemize}
According to \cite[Theorem 5.18]{Ch-2}, (A2) implies that for $0<\theta<\|g\|_{\infty}$,
\begin{equation}\label{coup-time}
\tilde E[ e^{\theta\, \tau}]\leq \frac{1}{1-\theta\, \|g\|_{\infty}},
\end{equation}
where $\tau=\inf\{t\geq 0,\La_t=\La_t'\}$. We choose and fix a $\theta$ with $0<\theta<\|g\|_{\infty}$ in the rest of this section. The inequality (\ref{coup-time}) is what we need to estimate the Wasserstein distance directly. Assumption (A2) provides a sufficient condition to guarantee (\ref{coup-time}) hold.
The coupling process for a continuous time Markovian chain is a well studied topic. There are lots of work on this topic. For example, due to \cite{Zhang, Mao2}, there are explicit conditions in terms of birth rate and death rate to check condition (A2) for birth-death process. We refer the reader to  \cite[Chapter 5]{Ch-2} for more discussion on this condition.

First, we divide $\S$ into finite subsets according to $\beta_i$. Precisely, choose a finite partition $\Gamma$ of $(-\infty,M]$, that is,
\[\Gamma:=\big\{ -\!\infty=:k_0<k_1<\cdots<k_m<k_{m+1}:=M\big\}.\] Corresponding to  $\Gamma$, there is a finite partition of  $\S$,
denoted by $ F:= \{F_1,\ldots,  F_{m+1}\}$, where \[F_i=\big\{j\in\S;\ \beta_j\in (k_{i-1},k_{i}]\big\},\quad i=1,\ldots, m+1.\]
We assume that each $F_i$ is not empty, otherwise, we can delete some points in the partition $\Gamma$ to ensure it.
Let $\phi:\S\ra \{1,\ldots,m+1\}$ be a  map  defined by $\phi(j)=i$ if $j\in F_i$.
Let
\begin{equation}\label{beta}
\beta^F_i=\sup_{j\in F_i}\beta_{j} \ \text{for}\  i=1,\ldots,m+1, \ \text{so}\ \beta_j\leq \beta_{\phi(j)}^F \ \text{for every  $j\in \S$}, \ \text{and}\ \beta_i^F<\beta_{i+1}^F.
\end{equation}
Set $Q^F=(q_{ij}^F)$ be a new $Q$-matrix on state space $\{1,2,\ldots,m+1\}$ corresponding to $F$ defined by
\begin{equation}\label{Q-f}
q_{ik}^F=\inf_{r\in F_i}\sum_{j\in F_k} q_{rj},\  k>i;\quad q_{ik}^F=\sup_{r\in F_i}\sum_{j\in F_k} q_{rj},\  k<i,\ \text{and}\ q_{ii}^F=-\sum_{k\neq i} q_{ik}^F.
\end{equation}
As each $F_i$ is nonempty and $(q_i)_{i\in \S}$ is bounded, we get $0\leq q_{ik}^F\leq \sup_{i\in \S} q_i<\infty,\, k\neq i$.  It is not easy to check whether $Q^F$ is irreducible. But this does not impact the criterion provided  by the theory of M-matrix. It is an advantage that there is no demand on irreducibility  in checking a matrix to be nonsingular  M-matrix.  However, in the study of Perron-Frobenius theorem,  irreducibility of a matrix plays important role.

\begin{thm}\label{t-m-infi}
Assume that (A1) (A2)   and (A3) hold. For the partition $F$ given above, if the $(m+1)\times (m+1)$-matrix
\[-\big(Q^F+\diag(\beta_1^F,\ldots,\beta_{m+1}^F)\big)H_{m+1}\]
is a nonsingular M-matrix, where
\begin{equation}\label{h-matrix}H_{m+1}=\begin{pmatrix}1&1&1&\cdots&1\\ 0&1&1&\cdots&1\\ \vdots&\vdots&\vdots&\cdots&\vdots\\ 0&0&0&\cdots&1\end{pmatrix}_{(m+1)\times (m+1)},
\end{equation}
then there exist  constants $\tilde C,\,\tilde \alpha>0$ and a probability measure $\nu$ on $\R^d\times \S$ such that
\[W_{\tilde \rho}(\delta_{(x,i)}P_t,\nu)\leq  2\tilde C(\sqrt{3+\rho(|x|)}+\tilde C)  e^{-\tilde \alpha t},\quad (x,i)\in \R^d\times \S,
\] where $\tilde C>0$ is independent of  $(x,i)$.
\end{thm}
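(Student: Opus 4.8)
The plan is to reduce Theorem \ref{t-m-infi} to Theorem \ref{t-f-m} by constructing, from the original regime-switching diffusion $(X_t,\La_t)$, a comparison object living over the finite state space $\{1,\ldots,m+1\}$ and then checking that the M-matrix hypothesis on $-\big(Q^F+\diag(\beta_1^F,\ldots,\beta_{m+1}^F)\big)H_{m+1}$ is exactly what is needed to produce a vector $\xi\gg 0$ with $\big(Q+\diag(\beta_i)\big)$-type negativity along the coupling. Concretely, set $\phi(j)=i$ for $j\in F_i$, and look for the weight vector in the separable form $\xi_j = (H_{m+1}\eta)_{\phi(j)}$ for a suitable $\eta\gg0$ in $\R^{m+1}$; the role of $H_{m+1}$ (the upper-triangular all-ones matrix) is to guarantee that $\xi$ is nondecreasing along the blocks $F_1,\ldots,F_{m+1}$, which is what lets us dominate the true jump rates by the comparison rates $q_{ik}^F$ in the right direction. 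Since $-\big(Q^F+\diag(\beta^F)\big)H_{m+1}$ is a nonsingular M-matrix, Proposition \ref{m-matrix}(4) gives $\eta\gg0$ with $-\big(Q^F+\diag(\beta^F)\big)H_{m+1}\eta\gg0$, i.e. $\big(Q^F+\diag(\beta^F)\big)\zeta\ll0$ where $\zeta:=H_{m+1}\eta\gg0$ and $\zeta$ is (strictly) increasing in its index.

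The second step is the key comparison estimate, mirroring Lemma \ref{lem-1}. Work on the event $\{\La_s=\La_s'\}$ so that $\La_r=\La_r'$ for $r\geq s$, and apply Itô's formula to $\rho(|X_r-Y_r|)\,\zeta_{\phi(\La_r)}$. The diffusion part is controlled by (A1): $\tilde L^{(\La_r)}\rho(|X_r-Y_r|)\le \beta_{\La_r}\rho(|X_r-Y_r|)\le \beta^F_{\phi(\La_r)}\rho(|X_r-Y_r|)$ since $\beta_j\le\beta^F_{\phi(j)}$. For the jump part, one must show
\[
\sum_{l\in\S} q_{\La_r l}\big(\zeta_{\phi(l)}-\zeta_{\phi(\La_r)}\big)\ \le\ \big(Q^F\zeta\big)_{\phi(\La_r)},
\]
and here monotonicity of $\zeta$ is essential: grouping the inner sum over the blocks $F_k$, the contribution from $k>\phi(\La_r)$ has positive increment $\zeta_k-\zeta_{\phi(\La_r)}$ and is bounded above by replacing the rate with $q^F_{\phi(\La_r)k}=\inf_{r'\in F_i}\sum_{j\in F_k}q_{r'j}$... actually by the $\ge$ direction we need the \emph{smallest} available rate to be an \emph{upper} bound — this is the delicate bookkeeping point — while for $k<\phi(\La_r)$ the increment is negative and we bound with $q^F_{\phi(\La_r)k}=\sup_{r'\in F_i}\sum_{j\in F_k}q_{r'j}$; the diagonal is conservative by construction of $Q^F$. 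Combining, $\big(Q^F+\diag(\beta^F)\big)\zeta\ll0$ yields a constant $\alpha>0$ with $(Q^F\zeta)_{\phi(\La_r)}+\beta^F_{\phi(\La_r)}\zeta_{\phi(\La_r)}\le-\alpha\,\zeta_{\phi(\La_r)}$, whence $\E[\rho(|X_t-Y_t|)\zeta_{\phi(\La_t)}]$ satisfies a Grönwall differential inequality and decays like $e^{-\alpha t}$; since $\zeta$ takes only finitely many values bounded away from $0$ and $\infty$, $\E[\rho(|X_t-Y_t|)]\le C_2\,\E[\rho(|X_s-Y_s|)]e^{-\alpha(t-s)}$ as in (\ref{ine-1}).

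The remaining step is to run the coupling argument of Theorem \ref{main-1} verbatim, with the one change that the exponential tail of the coupling time $\tau$ of $(\La_t,\La_t')$ now comes from (\ref{coup-time}) under (A3) rather than from (\ref{theta}): split $\E[\tilde\rho((X_t,\La_t),(Y_t,\La_t'))]$ over $\{\tau>t/2\}$ and $\{\tau\le t/2\}$, use $\tilde E[e^{\theta\tau}]\le(1-\theta\|g\|_\infty)^{-1}$ and Chebyshev for the first piece, the comparison estimate above (together with (A2) to bound $\E[\rho(|X_\tau|)],\E[\rho(|Y_\tau|)]$ after conditioning on $\mathscr F_\tau$) for the second, obtaining $W_{\tilde\rho}(\delta_{(x,i)}P_t,\delta_{(y,j)}P_t)\le 2\tilde C\sqrt{3+\rho(|x|)+\rho(|y|)}\,e^{-\tilde\alpha t}$ with $\tilde\alpha=\min\{\alpha,\theta\}/4$; then the weak-compactness/Fatou argument from the end of the proof of Theorem \ref{main-1} produces the stationary $\nu$ and upgrades the two-point estimate to (\ref{exp-ine-1})'s form. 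I expect the main obstacle to be exactly the sign bookkeeping in the jump comparison: verifying that the asymmetric definitions of $q^F_{ik}$ (infimum for $k>i$, supremum for $k<i$) in (\ref{Q-f}) combine with monotonicity of $\zeta$ to give a genuine \emph{upper} bound on the true generator applied to $j\mapsto\rho\cdot\zeta_{\phi(j)}$ — one has to be careful that $Q^F$ is used as an upper-comparison operator on the cone of block-constant monotone functions, and that the conservativity adjustment in the diagonal does not spoil the inequality.
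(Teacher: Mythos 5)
Your overall strategy matches the paper's: build a block-constant weight $\xi_j=(H_{m+1}\eta)_{\phi(j)}$ from the M-matrix hypothesis, prove the generator comparison $Q\xi(r)\le (Q^F\xi^F)(\phi(r))$, run It\^o plus Gronwall to get the analogue of Lemma \ref{lem-1} after the coupling time, and finish with the exponential tail of $\tau$ from (A3) via (\ref{coup-time}) and the compactness argument of Theorem \ref{main-1}. However, the one step you yourself single out as the crux --- the sign bookkeeping in the jump comparison --- is exactly where your write-up goes wrong, and you leave it unresolved. The error is the monotonicity direction of $\zeta=H_{m+1}\eta$: since row $i$ of $H_{m+1}$ has ones in columns $i,\ldots,m+1$, one gets $\zeta_i=\eta_i+\cdots+\eta_{m+1}$, which is strictly \emph{decreasing} in $i$, not increasing as you assert. (This is also the natural choice: the blocks are ordered by increasing $\beta^F_i$, so the worse environments should receive the smaller weights.) With the correct direction the bookkeeping closes: for $k>i$ the increment $\zeta_k-\zeta_i$ is \emph{negative}, so replacing the true rate $\sum_{j\in F_k}q_{rj}$ by the smaller quantity $q^F_{ik}=\inf_{r\in F_i}\sum_{j\in F_k}q_{rj}$ makes that (negative) term larger, i.e.\ gives an upper bound; for $k<i$ the increment is positive and the supremum gives an upper bound; and the diagonal contributes nothing because $\xi$ is constant on each block. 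This yields $Q\xi(r)\le(Q^F\xi^F)(\phi(r))$, and combined with $\beta_j\le\beta^F_{\phi(j)}$ and $(Q^F+\diag(\beta^F))\xi^F\ll0$ it gives the Gronwall inequality you want. As written --- with $\zeta$ increasing --- the comparison would fail in precisely the direction you flagged ("we need the smallest available rate to be an upper bound"), so the proposal has a genuine gap at its central step, albeit one that is repaired by reversing the monotonicity claim.

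Two smaller points. First, for the weak compactness of $(\delta_{(x,i)}P_t)_{t>0}$ you now need tightness in the $\La$-coordinate as well, since $\S$ is infinite; the paper obtains this from the exponential ergodicity of $(\La_t)$ guaranteed by (A3), which supplies a compact function $h$ on $\S$ with $\sup_{t>0}\E[h(\La_t)]<\infty$. Second, your parenthetical that Proposition \ref{m-matrix}(4) applied to $-\big(Q^F+\diag(\beta^F)\big)H_{m+1}$ produces $\eta\gg0$ with $\big(Q^F+\diag(\beta^F)\big)H_{m+1}\eta\ll0$ is correct and is exactly how the paper proceeds.
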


\begin{proof}
Let $(\La_t,\La'_t)$ be the coupling given by (A2). Let $(X_t,Y_t)$ be defined by (\ref{coup-pro}). Similar to the proof of Theorem \ref{t-f-m}, the key point is also the estimates given by Lemma \ref{lem-1}.

As $-\big(Q^F+\diag(\beta_1^F,\ldots,\beta_{m+1}^F)\big)H_{m+1}$ is a nonsingular M-matrix, there exists a vector $\eta^F=(\eta_1^F,\ldots,\eta_{m+1}^F)^\ast\gg 0$ such that
\[\lambda^F=(\lambda_1^F,\ldots,\lambda_{m+1}^F)^\ast=\big(Q^F+\diag(\beta_1^F,\ldots,\beta_{m+1}^F)\big)H_{m+1}\eta^F\ll 0.\]
Then $\bar \lambda:=\max_{1\leq i\leq m+1} \lambda_i^F<0$.
Set $\xi^F=H_{m+1}\eta^F$.
Then
\[\xi_i^F=\eta_{m+1}^F+\cdots+\eta_i^F,\ i=1,\ldots m+1. \]
Hence, $\xi_{i+1}^F<\xi_i^F$, $i=1,\ldots,m$, and $\xi^F\gg 0$.

We extend the vector $\xi^F$ to a vector on $\S$ by setting $\xi_r=\xi_i^F$, if $r\in F_i$.
For $r\in F_i$,  we obtain
\begin{align*}
Q\xi(r)&=\sum_{j\in\S,j\neq r} q_{rj}(\xi_j-\xi_r)=\sum_{j\notin F_i,j\in \S}q_{rj}(\xi_j-\xi_r)\\
&=\sum_{k<i}\big(\sum_{j\in F_k} q_{rj}\big)(\xi_k^F-\xi_i^F)+\sum_{k>i}\big(\sum_{j\in F_k}q_{rj}\big)(\xi_k^F-\xi_i^F)\\
&\leq \sum_{k<i} q_{ik}^F(\xi_k^F-\xi_i^F)+\sum_{k>i}q_{ik}^F(\xi_k^F-\xi_i^F)
=\big(Q^F\xi^F\big)(i),
\end{align*}
where we have used (\ref{Q-f}).
Applying It\^o's formula to $(X_t,\La_t)$, $(Y_t,\La_t')$ with  $X_0=x$, $Y_0=y$ and  $\La_0=\La_0'=r$, we have, for every $0<u<t$,
\begin{align*}
&\E[\rho(|X_t-Y_t|)\xi_{\La_t}]\\
&\leq \E[\rho(|X_u-Y_u|)\xi_{\La_u}]+\E\Big[\int_u^t\big((Q \xi)(\La_s)+\beta_{\La_s}\xi_{\La_s}\big)\rho(|X_s-Y_s|)\d s\Big]\\
&\leq \E[\rho(|X_u-Y_u|)\xi_{\La_u}]+\E\Big[\int_u^t\big((Q^F\xi^F)(\phi(\La_s))+\beta^F_{\phi(\La_s)}\xi_{\phi(\La_s)}^F\big)\rho(|X_s-Y_s|)\d s\Big]\\
&\leq \E[\rho(|X_u-Y_u|)\xi_{\La_u}]+\frac{\bar \lambda}{\xi_{\mathrm{min}}^F}\E\Big[\int_u^t\rho(|X_s-Y_s|)\xi_{\La_s}\d s\Big],
\end{align*}
where $\phi:\S\ra F$ denotes the projection map, $\xi_{\mathrm{max}}^F=\max_{1\leq i\leq m+1}\xi^F_i>0$. Set $\tilde \alpha=-\bar\lambda/\xi_{\mathrm{max}}^F>0$.
Due to the arbitrariness of $0<u<t$, we can apply Gronwall's inequality in differential form to get
\begin{equation}\label{ine-5}
\E\big[\rho(|X_t-Y_t|)\xi_{\La_t}\big]\leq \E\big[\rho(|X_u-Y_u|)\xi_{\La_u}\big]e^{-\tilde \alpha(t-u)},\quad 0<u<t.
\end{equation}
Thanks to (\ref{ine-5}) and (A2), we can prove that
\[W_{\tilde \rho}(\delta_{(x,i)}P_t,\delta_{(y,j)}P_t)\leq 2\tilde C\sqrt{3+\rho(|x|)+\rho(|y|)} e^{-\tilde \alpha t}\]
for some $\tilde C>0$ and $\tilde \alpha>0$.
According to (A3), we know that $(\La_t)$ is exponential ergodic, hence there exists a compact function $h$ on $\S$ such that $\sup_{t>0}\E[h(\La_t)]\leq C_4$, where $C_4$ is a positive constant (see \cite[Theorem 4.4]{Chen}). Combining with (A2), there is a constant $C_5>0$ so that
\[\sup_{t>0}\E[\rho(|X_t|)+h(\La_t)]\leq C_5.\]
Therefore, the family of probability measures $(\delta_{(x,i)}P_t)_{t>0}$ is weakly compact.
Then, following the similar argument as in Theorem \ref{main-1}, we can conclude the proof.
\end{proof}

Now we consider the consistency of our method on the finite partitions.
Under the assumption (A1), consider two finite partitions $\tilde \Gamma$ and $\Gamma$ of $(-\infty,M]$ such that $\tilde \Gamma$ is a refinement of $\Gamma$. Associated with $\tilde \Gamma$ and $\Gamma$, there are respectively two finite partitions $\tilde F$ and $F$ of $\S$ given by
\[\tilde F=\{\tilde F_1,\ldots,\tilde F_{n+1}\},\ \text{and}\ F=\{F_1,\ldots,F_{m+1}\}.\]
Therefore, each $\tilde F_k$ is a subset of some $F_i$. Without loss of generality, assume $\tilde F_k$ is nonempty for each $k$. Let
$\beta^F=(\beta_1^F,\ldots,\beta_{m+1}^F)^\ast$, $\beta^{\tilde F}=(\beta_1^{\tilde F},\ldots,\beta_{n+1}^{\tilde F})^\ast$, $(q_{ij}^F)$ and $(q_{kl}^{\tilde F})$ be defined similarly by (\ref{beta}) and (\ref{Q-f}).

\begin{prop}\label{refine}
Suppose that for each $k$, $1\leq k\leq n+1$,
\begin{equation}\label{q-ff}
\begin{split}
q_{ij}^F&\geq \sum_{l:\tilde F_l\subseteq F_j} q_{kl}^{\tilde F}, \ \text{if}\ i<j;\quad
q_{ij}^F\leq \sum_{l:\tilde F_l\subseteq F_j} q_{kl}^{\tilde F},\ \text{if}\ i>j.
\end{split}
\end{equation}
Then the fact $-(Q^F+\diag(\beta^F))H_{m+1}$ is a nonsingular M-matrix yields that so is the matrix $-(Q^{\tilde F}+\diag(\beta^{\tilde F}))H_{n+1}$.
\end{prop}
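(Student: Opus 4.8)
The plan is to reverse the reduction carried out in the proof of Theorem \ref{t-m-infi}. By Proposition \ref{m-matrix} (semipositivity) together with the change of variables $\eta=H_{n+1}^{-1}\xi$ — note that $\eta\gg0$ is the same as $\xi$ being strictly decreasing with $\xi_{n+1}>0$, since $\eta_l=\xi_l-\xi_{l+1}$ for $l\le n$ and $\eta_{n+1}=\xi_{n+1}$ — the conclusion ``$-(Q^{\tilde F}+\diag(\beta^{\tilde F}))H_{n+1}$ is a nonsingular M-matrix'' is equivalent to the existence of a strictly decreasing vector $\xi^{\tilde F}\gg0$ on $\{1,\dots,n+1\}$ with $(Q^{\tilde F}+\diag(\beta^{\tilde F}))\xi^{\tilde F}\ll0$. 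In the same way the hypothesis furnishes a strictly decreasing $\xi^F\gg0$ on $\{1,\dots,m+1\}$ with $\lambda^F:=(Q^F+\diag(\beta^F))\xi^F\ll0$. So the whole task is to manufacture $\xi^{\tilde F}$ out of $\xi^F$.

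Let $\psi:\{1,\dots,n+1\}\to\{1,\dots,m+1\}$ be the nondecreasing surjection determined by $\tilde F_l\subseteq F_{\psi(l)}$, and put $\tilde\xi^F_l:=\xi^F_{\psi(l)}$, the push-forward of $\xi^F$. This vector is positive but only nonincreasing, so I would work with $\xi^{\tilde F}_l:=\tilde\xi^F_l+\veps(n+2-l)$ for a small $\veps>0$; it is then positive and strictly decreasing, the decrease between two $\psi$-fibres being already present in $\xi^F$ and the decrease inside a fibre being supplied by the $\veps$-term. Since $Q^{\tilde F}+\diag(\beta^{\tilde F})$ is a fixed $(n+1)\times(n+1)$ matrix, $(Q^{\tilde F}+\diag(\beta^{\tilde F}))\xi^{\tilde F}\to(Q^{\tilde F}+\diag(\beta^{\tilde F}))\tilde\xi^F$ as $\veps\to0$, so it suffices to establish the strict inequality $(Q^{\tilde F}+\diag(\beta^{\tilde F}))\tilde\xi^F\ll0$ and then take $\veps$ small.

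Fix $k$ and set $i:=\psi(k)$. Because $\tilde\xi^F$ is constant on the $\psi$-fibre over $i$, expanding $(Q^{\tilde F}\tilde\xi^F)(k)=\sum_{l\ne k}q^{\tilde F}_{kl}(\tilde\xi^F_l-\tilde\xi^F_k)$ and collecting terms according to $j=\psi(l)$ annihilates the contributions with $j=i$ and leaves
\[
(Q^{\tilde F}\tilde\xi^F)(k)=\sum_{j\ne i}\Big(\sum_{l:\,\psi(l)=j}q^{\tilde F}_{kl}\Big)\big(\xi^F_j-\xi^F_i\big),
\]
which I would compare, term by term in $j$, with $(Q^F\xi^F)(i)=\sum_{j\ne i}q^F_{ij}(\xi^F_j-\xi^F_i)$; for the diagonal one uses $\tilde\xi^F_k=\xi^F_i>0$ together with $\beta^{\tilde F}_k=\sup_{j\in\tilde F_k}\beta_j\le\sup_{j\in F_i}\beta_j=\beta^F_i$ coming from \eqref{beta}. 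The role of hypothesis \eqref{q-ff} is exactly to control each fibre-sum $\sum_{l:\psi(l)=j}q^{\tilde F}_{kl}$ against $q^F_{ij}$, with the inequality pointing in the direction dictated by the sign of $\xi^F_j-\xi^F_i$ (negative for $j>i$, positive for $j<i$); combining these estimates with the diagonal bound should give $\big((Q^{\tilde F}+\diag(\beta^{\tilde F}))\tilde\xi^F\big)(k)<0$. I expect this last step to be the main obstacle: passing to the refinement $\tilde F$ replaces, for an up-step, the quantity $\inf_{r\in F_i}\sum_{s\in F_j}q_{rs}$ appearing in $q^F_{ij}$ by the a priori smaller $\sum_{l}\inf_{r\in\tilde F_k}\sum_{s\in\tilde F_l}q_{rs}$ appearing in $\sum_l q^{\tilde F}_{kl}$, so that the up-contributions become less negative, and there is a symmetric effect on the down-steps; one must check carefully that \eqref{q-ff} is precisely the hypothesis keeping these losses within the slack produced by the strict improvement $\beta^{\tilde F}_k<\beta^F_i$ available for the sub-blocks of $F_i$ lying strictly below the state of $F_i$ attaining $\beta^F_i$. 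Granting the displayed negativity for every $k$, the perturbation argument of the previous paragraph yields $(Q^{\tilde F}+\diag(\beta^{\tilde F}))\xi^{\tilde F}\ll0$ for $\veps$ small, and Proposition \ref{m-matrix} then gives that $-(Q^{\tilde F}+\diag(\beta^{\tilde F}))H_{n+1}$ is a nonsingular M-matrix, completing the proof.
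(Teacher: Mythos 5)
Your overall strategy coincides with the paper's: take $\xi^F=H_{m+1}\eta^F$, push it forward to $\xi^{\tilde F}_k:=\xi^F_i$ whenever $\tilde F_k\subseteq F_i$, and compare $\big((Q^{\tilde F}+\diag(\beta^{\tilde F}))\xi^{\tilde F}\big)(k)$ with $\big((Q^F+\diag(\beta^F))\xi^F\big)(i)$ by regrouping the sum over $l$ according to which $F_j$ contains $\tilde F_l$. Your $\veps$-perturbation is in fact a welcome addition: the pushed-forward vector is only non-increasing (constant on each fibre), so $H_{n+1}^{-1}\xi^{\tilde F}$ is merely $\geq 0$ rather than $\gg 0$, and to invoke statement 4 of Proposition \ref{m-matrix} one does need the small strictly-monotone correction you describe; the paper passes over this silently.

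The genuine gap is that you stop at exactly the step that constitutes the proof. After regrouping, everything reduces to the termwise inequalities
\[
\Big(\sum_{l:\tilde F_l\subseteq F_j}q^{\tilde F}_{kl}\Big)\big(\xi^F_j-\xi^F_i\big)\;\leq\;q^F_{ij}\big(\xi^F_j-\xi^F_i\big),\qquad j\neq i,
\]
which is precisely what \eqref{q-ff} is designed to deliver once the direction of each inequality is matched to the sign of $\xi^F_j-\xi^F_i$; the $\beta$'s enter only through the single diagonal bound $\beta^{\tilde F}_k\xi^F_i\leq\beta^F_i\xi^F_i$, and summing gives $\big((Q^{\tilde F}+\diag(\beta^{\tilde F}))\xi^{\tilde F}\big)(k)\leq\lambda^F_i<0$. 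You never verify these inequalities; instead you speculate that the off-diagonal ``losses'' must be absorbed by slack coming from $\beta^{\tilde F}_k<\beta^F_i$. That mechanism is not available and would not work: the off-diagonal terms are proportional to the differences $\xi^F_j-\xi^F_i$, which are not controlled by the diagonal slack (and that slack can vanish, e.g.\ when $\tilde F_k$ contains the state of $F_i$ realizing the supremum $\beta^F_i$). The argument is a pure termwise comparison with no compensation between terms. Your unease is not baseless — as printed, the monotonicity $\xi^F_{i+1}<\xi^F_i$ forced by $\xi^F=H_{m+1}\eta^F$ and the inequalities in \eqref{q-ff} appear to point in opposite directions, so one of the two conventions must be read the other way round for the termwise bounds to close — but sorting out that sign bookkeeping and writing the comparison down is the entire content of the proposition, and the proposal leaves it undone.
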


\begin{proof}
According to Proposition \ref{m-matrix}, as $-(Q^F+\diag(\beta^F))H_{m+1}$ is a nonsingular M-matrix, there exists a vector $\eta^F\gg 0$ such that
$\big(Q^F+\diag(\beta^F))H_{m+1}\eta^F\ll 0$.
Let $\xi^F=H_{m+1}\eta^F$, then $\xi^F\gg 0$ and $\xi_i^F\leq \xi_{i+1}^F$ for $i=1,\ldots,m$.
Let $\xi_k^{\tilde F}=\xi_i^F$ if $\tilde F_k\subseteq F_i$, $k=1,\ldots, n+1$, and $\xi^{\tilde F}=\{\xi_1^{\tilde F},\ldots, \xi_{n+1}^{\tilde F}\}$.
Then by (\ref{q-ff}) and the fact $\beta_k^{\tilde F}\leq \beta_i^F$ if $\tilde F_k\subseteq F_i$, we have,
\begin{align*}
&\big(Q^{\tilde F}+\diag(\beta^{\tilde F})\xi^{\tilde F}
=\sum_{l=1}^{n+1}q_{kl}^{\tilde F}\xi_l^{\tilde F}+\beta_k^{\tilde F}\xi_k^{\tilde F}\\
&=\sum_{j<i}\big(\sum_{l:\tilde F_l\subseteq F_j}q_{kl}^{\tilde F}\big)(\xi_j^F-\xi_i^F)+\sum_{j>i}\big(\sum_{l:\tilde F_l\subseteq F_j}q_{kl}^{\tilde F}\big)
(\xi_j^F-\xi_i^F)+\beta_k^{\tilde F}\xi_i^F\\
&\leq \sum_{j\neq i}q_{ij}^F(\xi_j^F-\xi_i^F)+\beta_i^F\xi_i^F\\
&=\big(Q^F+\diag(\beta^F)\big)\xi^F(i)\ll 0.
\end{align*}
Therefore, $-\big(Q^{\tilde F}+\diag(\beta^{\tilde F})H_{n+1}$ is also a nonsingular M-matrix due to Proposition \ref{m-matrix}.
\end{proof}

By Theorem \ref{t-m-infi}, we can provide some examples of regime-switching  processes in an infinite state space, which are exponentially ergodic in the Wasserstein distance $W_{\tilde \rho}$.

\begin{exam} Let $\S=\{1,2,\ldots\}$ be a countable set. Let $(X_t,\La_t)$ be a state-independent regime-switching diffusion process given by (\ref{1.1}) and (\ref{1.2}). Assume (A1-A4) hold. Let $F_1=\{j\in\S; \beta_j<0\}$ and $F_2=\{j\in \S;\beta_j>0\}$. Set $\beta_1^F=\sup_{j\in F_1} \beta_j$ and $\beta_2^F=\sup_{j\in F_2}\beta_j$. $Q^F=(q_{ij}^F)$ is induced from $Q$ as above. We now check the condition that
\[-\big(Q^F+\diag(\beta_1^F,\beta_2^F)\big)H_2=\begin{pmatrix}-q_{11}^F-\beta_1^F& -\beta_1^F\\ q_{22}^F& -\beta_2^F\end{pmatrix}\]
is a nonsingular M-matrix.
By Proposition \ref{m-matrix}, it is equivalent to
\[\beta_1^F<-q_{11}^F\ \text{and}\ \beta_1^F<\beta_2^F<\frac{q_{22}^F\beta_1^F}{-q_{11}^F-\beta_1^F}.\]
This ensures that there are many regime-switching diffusion processes $(X_t,\La_t)$ with infinite state space $\S$ such that the conditions of Theorem \ref{t-m-infi} hold.
\end{exam}
 Next, we provide a more concrete example.
\begin{exam}
Let $(\La_t)$ be a birth-death process on countable set $\S=\{1,2,\ldots\}$. For each $i>1$,  set $q_{i i+1}=b_i>0$ and $q_{i i-1}=a_i>0$, and $q_{ij}=0$ for $j\neq i+1$ or $i-1$. Let $q_{12}=b_1>0$. Set $\mu_1=1$ and $\mu_n=b_1b_2\cdots b_{n-1}/a_2a_3\cdots a_n$ for $n\geq 2$. Assume
\[\sum_{i=1}^\infty \frac{1}{\mu_ib_i}\sum_{j=i+1}^\infty \mu_j<\infty.\]
Let $(\La_t,\La_t')$ be the classical coupling whose generator is given by
\begin{equation*}
\tilde Qh(i,j)=\begin{cases} [a_i(h(i-1,j)-h(i,j))+b_i(h(i+1,j)-h(i,j))]\\
                              \quad +[a_j(h(i,j-1)-f(i,j))+b_j(h(i,j+1)-h(i,j))], \quad i\neq j,\\
                              a_i(h(i-1,j-1)-h(i,j))+b_i(h(i+1,j+1)-h(i,j)), \quad i=j.
                \end{cases}
\end{equation*}
Let
\[g(i,j)=\sum_{k=1}^{j-1}\frac{1}{\mu_k b_k}\sum_{l=k+1}\mu_l.\]
Then $g$ satisfies
$\displaystyle \|g\|_\infty:=\sup_{(i,j)\in\S^2} g(i,j)\leq \sum_{k=1}^\infty\frac{1}{\mu_k b_k}\sum_{l=k+1}^\infty \mu_l<\infty$ by assumption.
It is easy to check that  $\tilde Q g(i,j)=-1$ by direct calculation. Therefore, assumption (A4) is satisfied.

For each $i\geq 1$, let $(X_t^{(i)})$ be a diffusion process on $[0,\infty)$ with reflecting boundary at $0$ satisfying following SDE:
\[\d X_t^{(i)}=\beta_i X_t^{(i)}\d t+\sqrt{2}\d B_t,
\]
where $\beta_i$ is a constant and $(B_t)$ is a Brownian motion. When $\beta_i<0$, $(X_t^{(i)})$ is an Ornstein-Uhlenbeck process, which is exponential ergodic. But when $\beta_i>0$, $(X_t^{(i)})$ is not recurrent.
For each $i\geq 1$, define a reflecting coupling for $(X_t^{(i)})$ with infinitesimal generator $\tilde L^{(i)}\sim (a^{(i)}(x,y),b^{(i)}(x,y))$, where
\[a^{(i)}(x,y)=\begin{pmatrix}1& -1\\ -1& 1\end{pmatrix},\quad b^{(i)}(x,y)=\begin{pmatrix}\beta_i\, x\\ \beta_i\, y\end{pmatrix}.\]
Let $\rho(|x-y|)=|x-y|$, then it is easy to see
\[\tilde L^{(i)}\rho(|x-y|)=\beta_i\rho(|x-y|),\quad x\neq y.\]
Therefore, Assumption (A1) holds. Let $\beta_1=-\kappa_1$ and $\beta_i=\kappa_2-i^{-1}$ for $i\geq 2$, where $\kappa_1$ and $\kappa_2$ are two positive constants.

Let $(X_t)$ be a solution of the following SDE:
\[\d X_t=\beta_{\La_t}X_t\d t+\sqrt{2}\d B_t, X_0=x>0.
\]
Then $(X_t,\La_t)$ is a state-independent regime-switching diffusion process satisfying assumptions (A1-A4).
Take $F_1=\{1\}$ and $F_2=\{2,3,\ldots\}$, which is a finite partition of $\S=\{1,2,\ldots\}$.
Then  $\beta_1^F=\beta_1=-\kappa_1$,
$\beta_2^F =\kappa_2$, $q^F_{12}=\sum_{j\in F_2}q_{1j}=b_1$ and $q^F_{21}=\sup_{i\in F_2} q_{i1}=a_2$.
When
$\dis
\kappa_2<\frac{a_2\kappa_1}{b_1+\kappa_1}$,
the matrix $-(Q^F+\diag(\beta_1^F,\beta_2^F)H_2$ is a nonsingular M-matrix.
Hence, the regime-switching process $(X_t,\La_t)$ is \textbf{exponentially ergodic} in the Wasserstein distance $W_{\tilde \rho}$ with
$\tilde \rho((x,i),(y,j))=\sqrt{\mathbf 1_{i\neq j}+|x-y|}$, if  \[\kappa_2<\frac{a_2\kappa_1}{b_1+\kappa_1}.\]
This example shows that although the diffusion process $(X_t)$ in a random environment characterized by $(\La_t)$ is transient in infinitely many  environments ($i\geq 2$), and is recurrent only in a environment ($i=1$), the process $(X_t)$ could be recurrent.
\end{exam}

\section{State-dependent switching in an infinite state space}
In this section, we study state-dependent regime-switching diffusion processes $(X_t,\La_t)$ defined by (\ref{1.1}) and (\ref{1.2}), that is, the Q-matrix of $(\La_t)$ depends on $(X_t)$. This makes the coupling process used in previous two sections useless because we can not make the coupling process $(\La_t,\La_t')$ moves together after their first meeting. So it is difficult in this case to construct successful coupling $(Y_t,\La_t')$ of $(X_t,\La_t)$ to estimate the Wasserstein distance between them. In \cite{XS}, we discussed how to construct successful couplings for state-dependent regime-switching process with $\S$ being finite. In this section, we shall study the asymptotic boundedness of $(X_t,\La_t)$. We extend the known results to state-dependent regime-switching diffusion processes in an infinite state space.

In this section,  $\S$ is an infinite set, i.e. $N=\infty$. Let $\rho:[0,\infty)\ra [0,\infty)$ satisfying $\rho(0)=0$, $\rho'>0$, $\lim_{r\ra \infty} \rho(r)=\infty$. We assume that
\begin{itemize}
\item[(H)] For each $i\in\S$, there exists  constants $\theta_i\in \R$, $K_i\in[0,\infty)$ such that
 \[L^{(i)}\rho(|x|)\leq \theta_i\rho(|x|)+K_i,\quad x\in\R^d,\] and $M_1:=\sup_{i\geq 1} \theta_i<\infty$, $M_2:=\sup_{i\geq 1} K_i<\infty$.
\end{itemize}
Divide $\S$ into finite nonempty subsets according to $\theta_i$. Let
\[\Gamma:=\{-\infty=:k_0<k_1<\cdots<k_m<k_{m+1}=M_1\}.\]
Corresponding to $\Gamma$, there is finite partition of $\S$, denoted by
$F=\{F_1,\ldots,F_{m+1}\}$, where
\[F_i=\{j\in \S;\ \theta_j\in (k_{i-1},k_i]\}.
\]
Let $\phi:\S\ra \{1,\ldots,m+1\}$ be defined by $\phi(j)=i$ if $j\in F_i$.
Set
\[\theta_i^F=\sup_{j\in F_i}\theta_j,\quad \text{for $i=1,\ldots,m+1$}, \ \text{so}\ \theta_j\leq \theta_{\phi(j)}^F\ \ \text{and $\theta_i^F<\theta_{i+1}^F$}.\]
Define a new $Q$-matrix $Q^F=(q_{ij}^F)$ on the space $\{1,\ldots,m+1\}$ corresponding to partition $F$ by
\begin{equation}\label{b-q-f}
q_{ik}^F=\inf_{r\in F_i}\inf_{x\in\R^d} \sum_{j\in F_k} q_{rj}(x), \ k>i;\quad q_{ik}^F=\sup_{r\in F_i}\sup_{x\in\R^d}\sum_{j\in F_k} q_{rj}(x),\ k<i,\ \ q_{ii}^F=-\sum_{k\neq i} q_{ik}^F.
\end{equation}

\begin{thm}\label{b-t-4}
Assume that (H) holds. Use the notation defined above. If the $(m+1)\times (m+1)$ matrix $-\big(Q^F+\diag(\theta_1^F,\ldots,\theta_{m+1}^F)\big)H_{m+1}$ is a nonsingular M-matrix, where
$H_{m+1}$ is defined by (\ref{h-matrix}),
then there are constants $\alpha,\,c_1,\,c_2>0$ such that
\begin{equation*}
  \E[\rho(|X_t|)]\leq c_1\E[\rho(|X_0|)]e^{-\alpha t}+c_2,\quad t>0.
\end{equation*}
\end{thm}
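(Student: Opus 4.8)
The plan is to mimic the argument behind Theorem \ref{t-m-infi}, replacing the coupling estimate of Lemma \ref{lem-1} by a direct It\^o estimate for the single process $(X_t,\La_t)$. First I would use the M-matrix hypothesis together with Proposition \ref{m-matrix} to produce a vector $\eta^F=(\eta_1^F,\ldots,\eta_{m+1}^F)^\ast\gg 0$ with
\[
\lambda^F:=\big(Q^F+\diag(\theta_1^F,\ldots,\theta_{m+1}^F)\big)H_{m+1}\eta^F\ll 0,
\]
and set $\xi^F=H_{m+1}\eta^F$, so that $\xi^F\gg 0$, $\xi_{i+1}^F<\xi_i^F$, and $\bar\lambda:=\max_i\lambda_i^F<0$. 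Extend $\xi^F$ to a function $\xi$ on all of $\S$ by $\xi_r=\xi_{\phi(r)}^F$; the function $\xi$ is then bounded above and below by positive constants $\xi_{\max}^F,\xi_{\min}^F$.

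Next I would compute, exactly as in the proof of Theorem \ref{t-m-infi}, that for $r\in F_i$,
\[
Q_x\xi(r)=\sum_{j\neq r}q_{rj}(x)(\xi_j-\xi_r)
=\sum_{k<i}\Big(\sum_{j\in F_k}q_{rj}(x)\Big)(\xi_k^F-\xi_i^F)+\sum_{k>i}\Big(\sum_{j\in F_k}q_{rj}(x)\Big)(\xi_k^F-\xi_i^F)
\leq (Q^F\xi^F)(i),
\]
using that $\xi_k^F-\xi_i^F>0$ for $k<i$, $\xi_k^F-\xi_i^F<0$ for $k>i$, together with the definition (\ref{b-q-f}) of $Q^F$ (this is precisely where the $\inf_x$ for $k>i$ and $\sup_x$ for $k<i$ are designed to work regardless of the $x$-dependence of $q_{rj}(x)$). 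Then apply It\^o's formula to $t\mapsto \rho(|X_t|)\xi_{\La_t}$. The generator of $(X_t,\La_t)$ acting on such a product splits into the diffusion part $\xi_{\La_t}L^{(\La_t)}\rho(|X_t|)$, bounded by $\xi_{\La_t}(\theta_{\La_t}\rho(|X_t|)+K_{\La_t})$ via (H), and the jump part $Q_x\xi(\La_t)\rho(|X_t|)$, bounded above by $(Q^F\xi^F)(\phi(\La_t))\rho(|X_t|)$. Combining, the drift of $\rho(|X_t|)\xi_{\La_t}$ is at most
\[
\big(Q^F\xi^F+\diag(\theta^F)\xi^F\big)(\phi(\La_t))\,\rho(|X_t|)+\xi_{\La_t}K_{\La_t}
\le \frac{\bar\lambda}{\xi_{\max}^F}\,\rho(|X_t|)\xi_{\La_t}+\xi_{\max}^F M_2,
\]
where I used $\theta_{\La_t}\le\theta_{\phi(\La_t)}^F$, $K_{\La_t}\le M_2$, and $\lambda_i^F\le\bar\lambda<0$ with $\xi_i^F\le\xi_{\max}^F$. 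Writing $\alpha=-\bar\lambda/\xi_{\max}^F>0$ and taking expectations gives the differential inequality $\frac{d}{dt}\E[\rho(|X_t|)\xi_{\La_t}]\le -\alpha\,\E[\rho(|X_t|)\xi_{\La_t}]+\xi_{\max}^F M_2$; Gronwall's inequality then yields
\[
\E[\rho(|X_t|)\xi_{\La_t}]\le \E[\rho(|X_0|)\xi_{\La_0}]e^{-\alpha t}+\frac{\xi_{\max}^F M_2}{\alpha},
\]
and dividing by $\xi_{\min}^F$ and multiplying the initial term by $\xi_{\max}^F/\xi_{\min}^F$ gives the claim with $c_1=\xi_{\max}^F/\xi_{\min}^F$ and $c_2=\xi_{\max}^F M_2/(\alpha\xi_{\min}^F)$.

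The main obstacle is justifying the application of It\^o's formula and the passage to the differential inequality: $\rho(|\cdot|)$ need not be $C^2$ at the origin, $\rho$ is unbounded, and $\S$ is infinite, so one must argue non-explosion and integrability before differentiating $\E[\rho(|X_t|)\xi_{\La_t}]$. I would handle this in the standard way — localize with stopping times $\tau_n=\inf\{t:|X_t|\ge n\}$ (and if needed smooth $\rho$ near $0$), derive the inequality for the stopped process, then let $n\to\infty$ using Fatou on the left and monotone/dominated convergence on the bounded right-hand side, the non-explosiveness of $(X_t,\La_t)$ being guaranteed by conditions $1^\circ$–$3^\circ$ and \cite[Theorem 2.1]{YZ}. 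The bound $\sup_{i}q_i<\infty$ is not assumed here, but it is not needed since the jump contribution to the generator is controlled pathwise through $Q_x\xi(\La_t)\le(Q^F\xi^F)(\phi(\La_t))$, a bounded quantity.
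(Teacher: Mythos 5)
Your proposal is correct and follows essentially the same route as the paper's own proof: produce $\eta^F\gg 0$ from the M-matrix condition, set $\xi^F=H_{m+1}\eta^F$, extend to $\S$ via the partition, verify $Q_x\xi(r)\le (Q^F\xi^F)(\phi(r))$, and close with It\^o plus Gronwall, arriving at the same constants $c_1=\xi_{\max}^F/\xi_{\min}^F$, $\alpha=-\bar\lambda/\xi_{\max}^F$, $c_2=-M_2(\xi_{\max}^F)^2/(\bar\lambda\,\xi_{\min}^F)$. Your closing remarks on localizing the It\^o argument address a point the paper leaves implicit, but they do not change the substance of the proof.
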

\begin{proof}
As $-\big(Q^F+\diag(\theta_1^F,\ldots,\theta_{m+1}^F)\big)H_{m+1}$ is a nonsingular M-matrix, there is a vector $\eta^F=(\eta_1^F,\ldots,\eta_{m+1}^F)^\ast\gg 0$ such that
\[\lambda^F=(\lambda_1^F,\ldots,\lambda_{m+1}^F)^\ast:
=(Q^F+\diag(\theta_1^F,\ldots,\theta_{m+1}^F))H_{m+1}\eta^F\ll 0.\]
Then $\lambda_{\mathrm{max}}:=\max_{1\leq i\leq m+1} \lambda_i^F<0$. Set $\xi^F=H_{m+1}\eta^F$. It is easy to see
\[\xi_i^F=\eta_{m+1}^F+\cdots+\eta_i^F,\ i=1,\ldots,m+1.\]
Hence, $\xi_{i+1}^F<\xi_i^F$, $i=1,\ldots, m$, and $\xi^F\gg 0$.
To proceed, we extend $\xi^F$ to a vector on $\S$ by setting $\xi_j=\xi_i^F$ if $j\in F_i$. Then we have, for $r\in F_i$, $x\in\R^d$,
\begin{align*}
Q_x\xi(r)&=\sum_{j\in\S,j\neq r} q_{rj}(x)(\xi_j-\xi_r)=\sum_{j\in F_i,j\in \S} q_{rj}(x)(\xi_j-\xi_r)\\
&=\sum_{k< i}\big(\sum_{j\in F_k} q_{rj}(x)\big)(\xi_k^F-\xi_i^F)+\sum_{k> i}\big(\sum_{j\in F_k} q_{rj}(x)\big)(\xi_k^F-\xi_i^F)\\
&\leq \sum_{k\neq i} q_{ik}^F(\xi_k^F-\xi_i^F)=\big(Q^F\xi^F\big)(i).
\end{align*}
By It\^o's formula, we obtain
\begin{align*}
&\E[\rho(|X_t|)\xi_{\La_t}]\\
&\leq \E[\rho(|X_0|)\xi_{\La_0}]
+\E\Big[\int_0^t\big((Q_{X_s}\xi)(\La_s)+\theta_{\La_s}\xi_{\La_s}\big)\rho(|X_s|)
+K_{\La_s}\xi_{\La_s}\d s\Big]\\
&\leq \E[\rho(|X_0|)\xi_{\La_0}]
+\E\Big[\int_0^t\big((Q^F\xi^F)(\phi(\La_s))+\theta_{\phi(\La_s)}^F
\xi_{\phi(\La_s)}^F\big)\rho(|X_s|)+K_{\La_s}\xi_{\phi(\La_s)}^F\d s\Big]\\
&\leq \E[\rho(|X_0|)\xi_{\La_0}]+\lambda_{\mathrm{max}}^F\E\Big[\int_0^t\rho(|X_s|)\d s\Big]
+M_2\xi_{\mathrm{max}}^Ft.
\end{align*}
This yields that
\begin{equation*}
  \E[\rho(|X_t|)\xi_{\La_t}]\leq \E[\rho(|X_0|)\xi_{\La_0}]e^{\frac{\lambda_{\mathrm{max}}^F}{\xi_{\mathrm{max}}^F} t}-\frac{M_2(\xi_{\mathrm{max}}^F)^2}{\lambda_{\mathrm{max}}}.
\end{equation*}
Therefore,
\begin{equation*}
  \E[\rho(|X_t|)]\leq \E[\rho(|X_0|)]\frac{\xi_{\mathrm{max}}^F}{\xi_{\mathrm{min}}^F}
  e^{\frac{\lambda_{\mathrm{max}}^F}{\xi_{\mathrm{mx}}^F} t}-\frac{M_2(\xi_{\mathrm{max}}^F)^2}{\lambda_{\mathrm{max}}\xi_{\mathrm{min}}^F},\ \ t>0.
\end{equation*}
We conclude the proof by taking $c_1=\frac{\xi_{\mathrm{max}}^F}{\xi_{\mathrm{min}}^F}$, $\alpha=-\frac{\lambda_{\mathrm{max}}^F}{\xi_{\mathrm{mx}}^F}$ and $c_2=-\frac{M_2(\xi_{\mathrm{max}}^F)^2}{\lambda_{\mathrm{max}}\xi_{\mathrm{min}}^F}$.
\end{proof}

\end{document}